\documentclass[final]{siamltex}
\usepackage{amsmath,amsfonts,graphicx}
%
\newcommand{\Hilb}{\mathbb{H}}
\newcommand{\Poly}{\mathbb{P}}
\newcommand{\R}{\mathbb{R}}
\newcommand{\B}{\mathcal{B}}
\newcommand{\Rev}{\mathcal{R}}
\newcommand{\G}{\mathcal{G}}
\newcommand{\logp}{\ell}
\newcommand{\iprod}[1]{\langle#1\rangle}
\newcommand{\bigiprod}[1]{\bigl\langle#1\bigr\rangle}
\newcommand{\biggiprod}[1]{\bigg\langle#1\bigg\rangle}
\newcommand{\Trial}{\mathcal{W}}

\newcommand{\Id}{\mathcal{I}}
\newcommand{\argmax}{\operatorname{argmax}}
\newcommand{\Lagint}{\mathcal{L}}

\newcommand{\Upp}{U^\sharp}
\newcommand{\sigmapp}{{\sigma^\sharp}}

\title{Superconvergence of a discontinuous Galerkin method for
fractional diffusion and wave equations\thanks{We thank KFUPM for
supporting this research as part of the project SB101020.}}

\author{Kassem Mustapha\thanks{
Department of Mathematics and Statistics,
KFUPM, Dhahran 31261, Saudi Arabia
(\texttt{kassem@kfupm.edu.au}).}
\and William McLean\thanks{
School of Mathematics and Statistics,
The University of New South Wales, Sydney 2052, Australia
(\texttt{w.mclean@unsw.edu.au}).}
}

\begin{document}
\maketitle

\begin{abstract}
We consider an initial-boundary value problem
for~$\partial_tu-\partial_t^{-\alpha}\nabla^2u=f(t)$, that is, for
a fractional diffusion ($-1<\alpha<0$) or wave ($0<\alpha<1$) equation.
A numerical solution is found by applying a piecewise-linear,
discontinuous Galerkin method in time combined with a piecewise-linear,
conforming finite element method in space.  The time mesh is graded
appropriately near~$t=0$, but the spatial mesh is quasiuniform.
Previously, we proved
that the error, measured in the spatial $L_2$-norm, is of
order~$k^{2+\alpha_-}+h^2\ell(k)$, uniformly in~$t$, where $k$ is the maximum
time step, $h$ is the maximum diameter of the spatial finite elements,
$\alpha_-=\min(\alpha,0)\le0$ and $\ell(k)=\max(1,|\log k|)$.
Here, we generalize a known result for
the classical heat equation (i.e., the case~$\alpha=0$) by showing that
at each time level~$t_n$ the solution is superconvergent with
respect to~$k$: the error is of order~$(k^{3+2\alpha_-}+h^2)\ell(k)$.
Moreover, a simple postprocessing step employing Lagrange interpolation
yields a superconvergent approximation for any~$t$.
Numerical experiments indicate that our theoretical error bound is 
pessimistic if~$\alpha<0$. Ignoring logarithmic factors, we observe 
that the error in the DG solution at~$t=t_n$, and after postprocessing 
at all~$t$, is of order~$k^{3+\alpha_-}+h^2$.  
\end{abstract}

\begin{keywords}
finite elements, dual problem, postprocessing
\end{keywords}

\begin{AMS}
26A33, 
35R09, 
45K05, 
47G20, 
65M12, 
65M15, 
65M60  
\end{AMS}

\pagestyle{myheadings}
\thispagestyle{plain}
\markboth{KASSEM MUSTAPHA AND WILLIAM MCLEAN}%
{SUPERCONVERGENCE OF A DISCONTINUOUS GALERKIN METHOD}
%
\section{Introduction}\label{intro}
%
In previous work~\cite{McLeanMustapha2009,MustaphaMcLean2009,
MustaphaMcLean2011,MustaphaMcLean2012},
we have studied discontinuous Galerkin (DG) methods for the
time discretization of the abstract intial value problem
\begin{equation}\label{eq: ivp}
u' +\B_\alpha Au  = f(t)\quad \text{
for~$0<t<T$,}\quad\text{with~$u(0)=u_0$},
\end{equation}
where $u'=\partial u/\partial t$ and $\B_\alpha=\partial_t^{-\alpha}$; more 
precisely, letting $\omega_\mu(t)=t^{\mu-1}/\Gamma(\mu)$ for~$\mu>0$, the
function~$\B_\alpha v$ is either a (Riemann--Liouville) fractional order 
derivative in time,
\begin{equation}\label{eq: frac deriv}
\B_\alpha v(t)=\frac{\partial}{\partial t}
    \int_0^t \omega_{1+\alpha}(t-s) v(s)\,ds
    \quad\text{if $-1<\alpha<0$,}
\end{equation}
or a fractional order integral in time,
\[
\B_\alpha v(t)=\int_0^t\omega_\alpha(t-s) v(s)\,ds
    \quad\text{if $0<\alpha<1$.}
\]
In Section~\ref{sec: preliminaries} we set out technical assumptions
on the operator~$A$, but for the present discussion we simply take
$Au=-\nabla^2u$ on a spatial domain~$\Omega$, and impose homogeneous
Dirichlet boundary conditions on~$u$.  

Problems of the form~\eqref{eq: ivp} arise in a variety of physical,
biological and chemical applications~\cite{KilbasSrivastavaTrujillo2006,
Mathai2011,MetzlerKlafter2000,MetzlerKlafter2004,Podlubny1999,SmithEtal1999,
SokolovKlafter2005,Tarasov2011}.  The case~$-1<\alpha<0$ describes slow
or anomalous sub-diffusion and occurs, for example, in models of fractured
or porous media, where the particle flux depends on the entire history
of the density gradient~$\nabla u$.  The case~$0<\alpha<1$ describes
wave propogation in viscoelastic materials \cite{Hanyga2001,
MainardiParadisi2001,Pruss1993}.

In the limit as~$\alpha\to0$, the evolution equation in~\eqref{eq: ivp}
becomes $u'+Au=f$, which is just the classical heat equation, and
Eriksson et al.~\cite{ErikssonJohnsonThomee1985} studied the convergence
of the DG solution~$U(t)\approx u(t)$ in this case.
For a maximum time step~$k$, and using discontinuous piecewise polynomials
of degree at most~$q-1$ in~$t$, with no spatial
discretization, they proved an optimal convergence rate
\[
\|U(t)-u(t)\|\le Ck^q\biggl(\|u_0\|_{q}+\|u^{(q)}(0)\|
    +\|f^{(q-1)}(0)\|+\int_0^{t_n}\|f^{(q)}(s)\|\,ds\biggr),
\]
for $0\le t\le T$, where $\|v\|$ is the norm in~$L_2(\Omega)$ and
$\|v\|_q=\|A^{q/2}v\|$ for $v\in D(A^{q/2})$.
In addition, they proved that the DG solution is superconvergent at the
$n$th time level~$t_n$, satisfying an error bound
\[
\|U(t_n^-)-u(t_n)\|\le Ck^{2q-1}\biggl(\|u_0\|_{2q-1}+\|u^{(q)}(0)\|_{q-1}
    +\int_0^{t_n}\|f^{(q)}(s)\|_{q-1}\,ds\biggr),
\]
where $U(t_n^-)=\lim_{t\to t_n^-}U(t)$ denotes the limit from the left.
Ericksson et al.\ were also able to prove that a
convergence rate faster then~$O(k^q)$ holds under less restrictive
spatial regularity requirements on the solution~$u$.
Our aim is to establish superconvergence results for the fractional-order
problem~\eqref{eq: ivp}, restricting our attention to the
piecewise-\emph{linear} DG method ($q=2$).  We believe
our scheme is the first to achieve better than second-order accuracy
in time.  As well as nodal superconvergence of the DG solution we show
that a postprocessed solution is superconvergent \emph{uniformly} in~$t$.

Many authors have studied numerical methods for~\eqref{eq: ivp}.  In
the case~$0<\alpha<1$, Sanz-Serna~\cite{SanzSerna1988} proposed a 
convolution quadrature scheme, and subsequently Cuesta, Lubich and
Palencia~\cite{CuestaPalencia2003a,CuestaPalencia2003b,
CuestaLubichPalencia2006} developed this approach to obtain 
an~$O(k^2)$ method as well as a fast 
implementation~\cite{SchaedleLopezFernandezLubich2006}.  McLean and
Thom\'ee~\cite{McLeanThomee1993} combined finite differences and
quadrature in time, with finite elements in space. 

In the case~$-1<\alpha<0$, Langlands and Henry~\cite{LanglandsHenry2005}
introduced an implicit Euler scheme involving the 
Gr\"unwald--Letnikov fractional derivative and spatial finite differences
with step size~$h$, and observed $O(k^{1/2}+h^2)$ convergence in
the case~$\alpha=-1/2$.  Yuste and Acedo~\cite{YusteAcedo2005} treated
an explicit Euler scheme and showed $O(k+h^2)$ convergence.  Zhuang, Liu,
Anh, Turner et al.~\cite{ChenLiuAnhTurner2012,LiuYangBurrage2009,
ZhuangLiuAnhTurner2008,ZhuangLiuAnhTurner2009} developed
another class of $O(k+h^2)$ finite difference methods, and 
Yuste~\cite{Yuste2006} presented an $O(k^2+h^2)$ method.
Cui~\cite{Cui2009} and Chen et al.~\cite{ChenLiuAnhTurner2010} 
studied $O(k+h^4)$ schemes, and Cui~\cite{Cui2012a,Cui2012b} analysed an 
$O(k^{\min(1-\alpha,2+\alpha)}+h^4)$ ADI scheme on a rectangular 
spatial domain; see also Wang and Wang~\cite{WangWang2011} and Zhang and 
Sun~\cite{ZhangSun2011}.  For another type of finite difference
scheme~\cite{Mustapha2011,MustaphaMuttawa2012}, the error is
$O(k^{2+\alpha}+h^2)$, and recently Jin et al.~\cite{JinLazarovZhou2012} 
proved optimal error bounds for two semidiscrete finite element methods.
Some of these works employ an alternative 
formulation of~\eqref{eq: ivp} using the Caputo fractional derivative.

In practice, the higher order derivatives of~$u$ are typically
singular~\cite{McLean2010,McLeanMustapha2007} as~$t\to0$, so formally 
high order methods~\cite{ChenLiuAnhTurner2010,ChenLiuAnhTurner2012,
Cui2009,Cui2012a,Cui2012b,LiuYangBurrage2009,ZhangSun2011,
WangWang2011,YusteAcedo2005,ZhuangLiuAnhTurner2008,ZhuangLiuAnhTurner2009} 
can fail to achieve fast 
convergence.  We have analysed several methods that allow for the 
singular behaviour of~$u$ by employing non-uniform time 
steps~\cite{McLeanMustapha2007,McLeanThomeeWahlbin1996,Mustapha2011,%
MustaphaMuttawa2012,MustaphaMcLean2012}.  Another approach, that yields 
a parallel in time algorithm with spectral accuracy 
even for problems with low regularity, is to approximate~$u$ via the 
Laplace inversion formula~\cite{LopezFernandezPalencia2004, 
LopezFernandezPalenciaSchadle2006,McLeanThomee2010}.  

To minimise the need for handling separately the cases $\alpha<0$~and 
$\alpha>0$, it is convenient to write
$\alpha_+=\max(\alpha,0)\ge0$ and $\alpha_-=\min(\alpha,0)\le0$
for the positive and negative parts of~$\alpha$, respectively.
In our theory, we assume that there exist positive constants
$M$~and $\sigma$ such that
\begin{equation}\label{eq: Au'}
\|Au_0\|+\|Au(t)\|\le M\quad\text{and}\quad
\|Au'(t)\|+t\|Au''(t)\|\le Mt^{\sigma-1},
\end{equation}
as well as
\begin{equation}\label{eq: A^2u'}
t\|A^2u'(t)\|+t^2\|A^2u''(t)\|\le Mt^{\sigma-\alpha_--1},
\end{equation}
for $0<t\le T$.  For instance~\cite{McLean2010,McLeanMustapha2007},
if $f\equiv0$ and $u_0\in D(A^2)$, then \eqref{eq:  Au'}~and \eqref{eq: A^2u'}
hold with $M=C\|A^2u_0\|$ and $\sigma=1+\alpha_-$.

Section~\ref{sec: preliminaries} sets out our notation and assumptions,
and recalls some tools and results from earlier 
work~\cite{MustaphaMcLean2011}.  In Section~\ref{sec: dual}, we introduce 
the homogeneous dual problem,
\begin{equation}\label{eq: dual}
-z'+\B_\alpha^*Az=0\quad\text{for $0<t<T$,}\quad\text{with $z(T)=z_T$,}
\end{equation}
for a given terminal value~$z_T$, and represent the nodal
error~$U(t_n^-)-u(t_n)$ in terms of $z(t)$ and its DG
approximation~$Z(t)$.  We allow a
class of non-uniform meshes, specified in Section~\ref{sec: superconv},
where we prove in Theorem~\ref{thm: nodal conv} that the nodal error 
is $O(k^{3+2\alpha_-})$.
Our method of analysis allows us to handle the two cases
$-1<\alpha<0$~and $0<\alpha<1$ together, but the former presents additional
technical difficulties in some places.
In an earlier paper~\cite[Theorem~4.1]{MustaphaMcLean2009}, we estimated
the nodal error for the case $0<\alpha<1$ in a different way that yields a
bound of order~$k^{2+\alpha}$.  (Although we
claimed $O(k^3)$ convergence, the first line
of~\cite[Corollary~4.2]{MustaphaMcLean2009} contains an error.)

In Section~\ref{sec: PP} we construct, via a simple interpolation scheme,
a postprocessed solution~$\Upp$ whose error 
is $O(k^{3+2\alpha_-})$ for \emph{all}~$t$, not just at the nodal values.
Section~\ref{sec: space} introduces a fully discrete scheme by applying
a continuous piecewise-linear, finite element method for the spatial
discretization.  Thus, the fully discrete solution is continuous
in space but discontinuous in time.  We show that the error bound
is as for the semidiscrete method but with an extra term of order~$h^2$.
Finally, we present some numerical examples in Section~\ref{sec: Numerical},
which indicate that our error bounds are pessimistic, at least in some
cases.  We observe that the nodal error from the time discretization is
$O(k^{3+\alpha_-})$, which is better than our theoretical estimate by
a factor~$k^{\alpha_-}$.  The same is true for the postprocessed
solution, uniformly in~$t$.
%
\section{Preliminaries}\label{sec: preliminaries}
%
\subsection{Assumptions on the spatial operator}
We assume as in earlier
work~\cite{ErikssonJohnsonThomee1985,MustaphaMcLean2011}
that the self-adjoint linear operator~$A$ has a complete eigensystem
in a real Hilbert space~$\Hilb$, say $A\phi_j=\lambda_j\phi_j$
for~$j=1$, 2, 3, \dots, and that $A$ is strictly positive-definite with
the eigenvalues ordered so that $0<\lambda_1\le\lambda_2\le\cdots$.
(Strict positive definiteness is not essential, but allowing $\lambda_1=0$
would result in some technical complications that we prefer to avoid.)
We denote the inner product of $u$~and $v$ in~$\Hilb$ by~$\iprod{u,v}$
and the corresponding norm by~$\|u\|=\sqrt{\iprod{u,u}}$.  Associated
with the linear operator~$A$ is a bilinear form, denoted by the
same symbol:
\[
A(u,v)=\sum_{m=1}^\infty\lambda_m\iprod{u,\phi_m}\iprod{\phi_m,v}
    \quad\text{for $u$, $v\in D(A^{1/2})$.}
\]
These assumptions hold, in particular, if $A=-\nabla^2$ subject to
homogenous Dirichlet boundary conditions on a bounded domain~$\Omega$,
because $A$ has a compact inverse on~$\Hilb=L_2(\Omega)$
and~$A(u,v)=\int_\Omega\nabla u\cdot\nabla v\,dx$.

\subsection{The discontinuous Galerkin time discrectization}
Fixing a time interval~$[0,T]$, we introduce a mesh for the time
discretization,
\begin{equation}\label{eq: tn mesh}
0=t_0<t_1<\cdots<t_N=T,
\end{equation}
with $k_n=t_n-t_{n-1}$ and $I_n=(t_{n-1},t_n)$ for $1\le n\le N$,
and a maximum time step~$k=\max_{1\le n\le N}k_n$.  Let $\Poly_r$ denote
the space of polynomials of degree at most~$r$ with coefficients 
in~$D(A^{1/2})$, and let
$J_n=\bigcup_{j=1}^n I_j=[0,t_n]\setminus\{t_0,t_1,\ldots,t_n\}$,
with $J=J_N$.  Our trial space~$\Trial$ consists of the piecewise-linear
functions $U:J\to D(A^{1/2})$ with~$U|_{I_n}\in\Poly_1$ for $1\le n\le N$.
We treat~$U$ as undefined at each time level~$t_n$, and write
\begin{equation}\label{eq: [U]}
U_-^n=U(t_n^-),\quad U_+^n=U(t_n^+),\quad[U]^n=U^n_+-U_-^n.
\end{equation}
For $r\in\{0,1,2,\ldots\}$ we let $C^r(J,\Hilb)$ denote the space of
functions~$v:J\to\Hilb$ such that the restriction~$v|_{I_n}$ extends to an
$r$-times continuously differentiable function on the closed
interval~$[t_{n-1},t_n]$, for~$1\le n\le N$.  In other words,
$v$ is a piecewise~$C^r$ function with respect to the time levels~$t_n$.

If $v\in C^1(J,\Hilb)$, then its fractional derivative~\eqref{eq: frac deriv}
admits the representation~\cite{MustaphaMcLean2011}
\begin{equation}\label{eq: B repn}
\B_\alpha v(t)=\omega_{1+\alpha}(t)v^0_+
    +\sum_{j=1}^{n-1}\omega_{1+\alpha}(t-t_j)[v]^j
    +\sum_{j=1}^n\int_{t_{j-1}}^{\min(t_j,t)}\omega_{1+\alpha}(t-s)
        v'(s)\,ds
\end{equation}
for $t\in I_n$ and $-1<\alpha<0$.
Thus, $\B_\alpha v(t)$ is left-continuous at~$t=t_{n-1}$ but has
a weak singularity
$(t-t_{n-1})^\alpha$ as $t\to t_{n-1}^+$ if~$[v]^{n-1}\ne0$.  However,
if $0<\alpha<1$ then $\B_\alpha v(t)$ is continuous for~$0\le t\le T$.
For $-1<\alpha<1$, the piecewise-linear DG time stepping procedure determines
$U\in\Trial$ by setting $U^0_-=u_0$ and 
requiring~\cite{MustaphaMcLean2009,MustaphaMcLean2011}
\begin{multline}\label{eq: DG step}
\iprod{U^{n-1}_+,X^{n-1}_+}
    +\int_{I_n}
    \bigl[\iprod{U'(t),X(t)}+A\bigl(\B_\alpha U(t),X(t)\bigr)\bigr]\,dt\\
    =\iprod{U_-^{n-1},X^{n-1}_+}+\int_{I_n}\iprod{f(t),X(t)}\,dt,
\end{multline}
for $1\le n\le N$ and for every test function~$X\in\Poly_1$.  The nonlocal
nature of the operator~$\B_\alpha$ means that at each time
step we must compute a sum involving all previous times levels, but this
sum can be evaluated via a fast algorithm~\cite{McLean2012}.
\subsection{Galerkin orthogonality and stability}

For $v\in C^1\bigl(J,D(A^{1/2})\bigr)$ and $w\in C\bigl(J,D(A^{1/2})\bigr)$,
we define the global bilinear form
\begin{equation}\label{eq: GN def}
G_N(v,w)=\iprod{v^0_+,w^0_+}+\sum_{n=1}^{N-1}\iprod{[v]^n,w^n_+}
    +\sum_{n=1}^N\int_{I_n}\bigl[\iprod{v',w}
    +A(\B_\alpha v,w)\bigr]\,dt.
\end{equation}
Summing the equations~\eqref{eq: DG step} gives
\begin{equation}\label{eq: GN U}
G_N(U,X)=\iprod{U_-^0,X^0_+}+\int_0^{t_N}\iprod{f(t),X(t)}\,dt
    \quad\text{for all $X\in\Trial$,}
\end{equation}
and conversely, \eqref{eq: GN U} implies that $U$ satisfies
\eqref{eq: DG step} for~$1\le n\le N$.  Since $[u]^n=0$, 
\begin{equation}\label{eq: GN(u,X)}
G_N(u,X)=\iprod{u_0,X^0_+}+\int_0^{t_N}\iprod{f(t),X(t)}\,dt,
\end{equation}
and thus, assuming $U^0_-=u_0$, the error has the Galerkin orthogonality
property
\begin{equation}\label{eq: Gal orthog}
G_N(U-u,X)=0\quad
\text{for all $X\in\Trial$.}
\end{equation}

The DG method is unconditionally stable.  Indeed, with the notation
\[
\|U\|_I=\sup_{t\in I}\|U(t)\|\quad\text{for any $I\subseteq[0,T]$,}
\]
the following estimate holds.

\begin{theorem}\label{thm: stability}
Given $U_-^0\in\Hilb$ and $f\in L_1\bigl((0,T);\Hilb\bigr)$, there exists
a unique $U\in\Trial$ satisfying \eqref{eq: DG step} for $n=1$, $2$,
\dots, $N$. Furthermore, $U(t)\in D(A)$ for~$t>0$, and
\[
\|U\|_{J_n}^2\le8\biggl|\iprod{U_-^0,U^0_+}
    +\int_0^{t_n}\iprod{f(t),U(t)}\,dt\biggr|
    \quad\text{for $1\le n\le N$.}
\]
\end{theorem}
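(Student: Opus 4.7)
The plan is to prove existence and uniqueness by a standard time-marching reduction, and then to derive the stability bound by testing the global identity~\eqref{eq: GN U} against $X=U$ and combining a telescoping identity for the ``local'' part of $G_N(U,U)$ with a nonnegativity property of its ``fractional'' part.

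For existence and uniqueness, I would note that, once $U|_{I_1},\dots,U|_{I_{n-1}}$ are in hand, \eqref{eq: DG step} with $n$ fixed becomes a finite-dimensional linear system for the two coefficients of $U|_{I_n}\in\Poly_1$. Showing that the corresponding homogeneous system (zero data, vanishing history) forces $U|_{I_n}=0$ is exactly the single-interval version of the energy argument used for the stability bound below, so injectivity, and hence unique solvability, follows simultaneously with the a~priori estimate. The extra regularity $U(t)\in D(A)$ for~$t>0$ then comes from rearranging the DG equation so that $A\B_\alpha U|_{I_n}$ is exhibited as a bounded element of $\Hilb$ determined by $f$, the initial data, and $U|_{I_1},\ldots,U|_{I_{n-1}}$.

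For the stability bound itself, I choose $X=U$ in \eqref{eq: GN U} to get
\[
G_N(U,U)=\iprod{U_-^0,U_+^0}+\int_0^{t_N}\iprod{f(t),U(t)}\,dt.
\]
I would next isolate the non-fractional part of $G_N(U,U)$ and telescope. Using $\int_{I_n}\iprod{U',U}\,dt=\tfrac12(\|U_-^n\|^2-\|U_+^{n-1}\|^2)$ together with $\iprod{[U]^n,U_+^n}=\tfrac12\|U_+^n\|^2-\tfrac12\|U_-^n\|^2+\tfrac12\|[U]^n\|^2$, one obtains
\[
\iprod{U_+^0,U_+^0}+\sum_{n=1}^{N-1}\iprod{[U]^n,U_+^n}+\sum_{n=1}^N\int_{I_n}\iprod{U',U}\,dt=\tfrac12\|U_+^0\|^2+\tfrac12\|U_-^N\|^2+\tfrac12\sum_{n=1}^{N-1}\|[U]^n\|^2.
\]
The remaining piece of $G_N(U,U)$ is the fractional form $\sum_n\int_{I_n}A(\B_\alpha U,U)\,dt$, and the central input of the whole argument is its nonnegativity. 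This is the positive-definiteness property of the Abel kernel transferred to piecewise~$C^1$ functions via the representation~\eqref{eq: B repn}, and is one of the tools recalled from~\cite{MustaphaMcLean2011}. I expect this to be the only substantial input; once invoked, it yields
\[
\tfrac12\|U_-^N\|^2+\tfrac12\sum_{n=1}^{N-1}\|[U]^n\|^2\le\Bigl|\iprod{U_-^0,U_+^0}+\int_0^{t_N}\iprod{f,U}\,dt\Bigr|.
\]

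To pass from these nodal quantities to the supremum norm~$\|U\|_{J_n}$, I apply the previous inequality with $N$ replaced by any $m\le n$, which is legitimate because the DG time-stepping is causal; writing $R_n$ for the right-hand side of the desired bound divided by~$8$, this gives $\|U_-^m\|^2\le 2R_n$ and $\|[U]^j\|^2\le 2R_n$ for $0\le j<m\le n$. Since $U|_{I_m}$ is linear in $t$, the estimate $\|U(t)\|\le\max(\|U_+^{m-1}\|,\|U_-^m\|)$ on $I_m$ together with $\|U_+^m\|^2\le 2\|U_-^m\|^2+2\|[U]^m\|^2\le 8R_n$ produces exactly the factor~$8$ appearing in the statement, finishing the proof. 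The main obstacle is the nonnegativity of the fractional bilinear form over $[0,t_N]$; once that key lemma is in hand, the rest is a short telescoping and convex-combination calculation.
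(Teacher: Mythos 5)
Your argument follows the paper's own proof: the paper's entire displayed reasoning is the inequality~\eqref{eq: GN(V,V)}, obtained exactly as you obtain it --- telescoping $\int_{I_n}\iprod{U',U}\,dt$ against the jump terms and invoking the nonnegativity of $\int_0^TA(\B_\alpha V,V)\,dt$ --- with existence/uniqueness and the passage from the nodal quantities to $\|U\|_{J_n}$ (including the factor $8$ from $\|U^{m-1}_+\|^2\le2\|U^{m-1}_-\|^2+2\|[U]^{m-1}\|^2$) delegated to the cited references; you reconstruct those details correctly, and your telescoping identity is precisely one half of~\eqref{eq: GN(V,V)}. The one step worth tightening is your replacement of the level-$m$ right-hand side by the level-$n$ one: causality lets you apply the energy inequality on $[0,t_m]$, but that bounds $\|U^m_-\|^2$ and $\|[U]^j\|^2$ by $2\bigl|\iprod{U^0_-,U^0_+}+\int_0^{t_m}\iprod{f,U}\,dt\bigr|$, and since $\iprod{f,U}$ has no definite sign this quantity need not be dominated by $R_n$; what the argument literally delivers is $\|U\|_{J_n}^2\le8\max_{1\le m\le n}\bigl|\iprod{U^0_-,U^0_+}+\int_0^{t_m}\iprod{f,U}\,dt\bigr|$. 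This imprecision is inherited from the theorem as stated (and is harmless in the way the estimate is used later, e.g.\ in Lemma~\ref{lem: Theta}, since the subsequent bounds on the right-hand side are monotone in the upper limit of integration), so it is not a defect specific to your proposal; aside from that, and a stray index ($U^m_+$ should read $U^{m-1}_+$ in the final convexity step), the proof is correct and matches the paper's route.
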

\begin{proof}
Since $\iprod{V',V}=\frac12(d/dt)\|V\|^2$~and
$\int_0^TA(\B_\alpha V,V)\,dt\ge0$ we find that
\begin{equation}\label{eq: GN(V,V)}
2G_N(V,V)\ge\|V^0_+\|^2+\|V^N_-\|^2+\sum_{n=1}^{N-1}\|[V]^n\|^2
	\quad\text{for all $V\in\Trial$,}
\end{equation}
implying the stated estimate
\cite[Theorem~2.1]{MustaphaMcLean2009},
\cite[Theorem~1]{MustaphaMcLean2011}.
\end{proof}
\subsection{A discontinuous quasi-interpolant}
The conditions
\begin{equation}\label{eq: Pi properties}
\Pi^{-}v(t_n^-)=v(t_n^-)\quad\text{and}\quad
\int_{I_n}\bigl[v(t)-\Pi^- v(t)]\,dt=0
\end{equation}
determine a unique projection operator~$\Pi^-:C(J,\Hilb)\to\Trial$.
Explicitly,
\[
\Pi^{-}v(t):=v(t_n^-)+\frac{v(t_n^-)-\bar v^n}{k_n/2}(t-t_n)
    \quad\text{for $t\in I_n$,}
\]
where $\bar v^n=k_n^{-1}\int_{I_n}v(t)\,dt$ denotes the mean value of~$v$
over~$I_n$, and the interpolation error admits the integral
representations~\cite[Equation~(3.8)]{MustaphaMcLean2009}
\begin{equation}\label{eq: error Pi minus}
\begin{aligned}
\Pi^{-}v(t)-v(t)&=\int_t^{t_n}v'(s)\,ds-2\,\frac{t_n-t}{k_n^2}\int_{I_n}
    (s-t_{n-1})v'(s)\,ds\\
    &=\int_t^{t_n}(t-s)v''(s)\,ds+\frac{t_n-t}{k_n^2}
        \int_{I_n}(s-t_{n-1})^2v''(s)\,ds,
	\quad\text{for $t\in I_n$.}
\end{aligned}
\end{equation}
Likewise, the conditions
\begin{equation}\label{eq: Pi+ properties}
\Pi^{+}v(t_{n-1}^+)=v(t_{n-1}^+)\quad\text{and}\quad
\int_{I_n}\bigl[v(t)-\Pi^+ v(t)]\,dt=0,
\end{equation}
determine a unique projector~$\Pi^+:C(J,\Hilb)\to\Trial$, with
\[
\Pi^{+}v(t):=v(t_{n-1}^+)+\frac{\bar v^n-v(t_{n-1}^+)}{k_n/2}(t-t_{n-1})
    \quad\text{for $t\in I_n$,}
\]
and
\begin{equation}\label{eq: error Pi plus}
\begin{aligned}
\Pi^{+}v(t)-v(t)&=-\int_{t_{n-1}}^tv'(s)\,ds
    +2\,\frac{t-t_{n-1}}{k_n^2}\int_{I_n}
    (t_n-s)v'(s)\,ds\\
    &=\int_{t_{n-1}}^t(s-t_{n-1})v''(s)\,ds+\frac{t-t_{n-1}}{k_n^2}
        \int_{I_n}(t_n-s)^2v''(s)\,ds.
\end{aligned}
\end{equation}
Thus, short calculations lead to the error bound
\begin{equation}\label{eq: ||Pi v-v||}
\|\Pi^\pm v-v\|_{I_n}\le (4-r)k_n^{r-1}\int_{I_n}\|v^{(r)}(t)\|\,dt
    \le (4-r)k_n^r\|v^{(r)}\|_{I_n}
\quad\text{for $r\in\{1,2\}$,}
\end{equation}
and the stability estimates
\begin{equation}\label{eq: Pi norm}
\|\Pi^\pm v\|_{I_n}\le 3\|v\|_{I_n},\quad
\bigl\|(\Pi^\pm)'v\bigr\|_{I_n}\le\frac{2}{k_n}\int_{I_n}\|v'(t)\|\,dt,\quad
\bigl\|[\Pi^\pm v]^n\bigr\|\le\int_{I_n}\|v'(t)\|\,dt.
\end{equation}

%
\section{Dual problem}\label{sec: dual}
%
\subsection{Properties of the adjoint operator}
The adjoint operator appearing in the dual
problem~\eqref{eq: dual} should satisfy, for appropriate $u$~and $v$, 
the identity
\begin{equation}\label{eq: adjoint}
\int_0^T\iprod{v,\B_\alpha w}\,dt
    =\int_0^T\iprod{\B_\alpha^*v, w}\,dt,
\end{equation}
and the next lemma establishes an explicit representation of~$\B_\alpha^*$.

\begin{lemma}\label{lem: B repn}
The identity~\eqref{eq: adjoint} holds in the following cases.
\begin{enumerate}
\item
If $-1<\alpha<0$ and $v$, $w\in C^1(J,\Hilb)$, with
\[
\B_\alpha^*w(t)=-\frac{\partial}{\partial t}\int_t^T
    \omega_{1+\alpha}(s-t)w(s)\,ds
    \quad\text{for $t\in J$.}
\]
\item
If $0<\alpha<1$ and $v$, $w\in C(J,\Hilb)$, with
\[
\B_\alpha^*w(t)=\int_t^T \omega_\alpha(s-t)w(s)\,ds\quad
    \text{for $0\le t\le T$.}
\]
\end{enumerate}
\end{lemma}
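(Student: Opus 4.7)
The plan is to treat the two cases separately. Case~2 reduces to a direct application of Fubini's theorem, whereas case~1 requires the piecewise representation~\eqref{eq: B repn} of $\B_\alpha w$ followed by a piecewise integration by parts.

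For $0<\alpha<1$, I would substitute $\B_\alpha w(t)=\int_0^t\omega_\alpha(t-s)w(s)\,ds$ into the left-hand side of~\eqref{eq: adjoint} and swap the order of integration over the triangle $\{(s,t):0\le s\le t\le T\}$. Since $v,w\in C(J,\Hilb)$ are bounded with at worst finitely many jumps and the kernel $\omega_\alpha$ is integrable, Fubini applies without difficulty. Relabelling the dummy variables identifies the inner integral $\int_s^T\omega_\alpha(t-s)v(t)\,dt$ as $\B_\alpha^*v(s)$, and \eqref{eq: adjoint} falls out at once.

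For $-1<\alpha<0$, let $V^*(t):=\int_t^T\omega_{1+\alpha}(s-t)v(s)\,ds$, so that $\B_\alpha^*v=-(V^*)'$ by the stated formula. Substituting~\eqref{eq: B repn} into the left-hand side splits it into three contributions: a boundary term involving $w^0_+$, a jump sum over $[w]^j$ for $1\le j\le N-1$, and a double integral containing $w'$. Applying Fubini to each (justified since $\omega_{1+\alpha}$ is integrable for $\alpha>-1$), the first two contributions combine into $\iprod{V^*(0),w^0_+}+\sum_{j=1}^{N-1}\iprod{V^*(t_j),[w]^j}$, while the third collapses to $\int_0^T\iprod{V^*(s),w'(s)}\,ds$.

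To finish, I would integrate this last integral by parts on each $I_n$ and sum. The function $V^*$ is continuous at every node---jumps of $v$ do not create jumps in $V^*$ because the kernel $\omega_{1+\alpha}$ is integrable---while $w$ does jump, so the telescoped boundary terms reduce, using $V^*(T)=0$, to $-\iprod{V^*(0),w^0_+}-\sum_{n=1}^{N-1}\iprod{V^*(t_n),[w]^n}$. These exactly cancel the first two contributions above, leaving $-\int_0^T\iprod{(V^*)',w}\,dt=\int_0^T\iprod{\B_\alpha^*v,w}\,dt$, as required. The main obstacle I anticipate is justifying the piecewise integration by parts: $(V^*)'$ carries an integrable endpoint singularity at each node, so one must verify that $V^*$ is absolutely continuous on each closed subinterval $\overline{I_n}$ despite failing to be $C^1$ there. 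Once that is in hand, the remainder of the argument is careful bookkeeping of the jump and boundary terms.
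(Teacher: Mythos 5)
Your argument is correct and is essentially the paper's own proof: both rest on the piecewise representation~\eqref{eq: B repn}, Fubini over the triangle $0\le s\le t\le T$ (valid since $\omega_{1+\alpha}$ is integrable for $\alpha>-1$), and a piecewise integration by parts whose telescoped boundary terms cancel the $w^0_+$ and $[w]^j$ contributions. The only difference is organizational — you expand $\B_\alpha w$ and move the derivative off $w'$ onto the globally continuous function $V^*$, whereas the paper expands $\B_\alpha v$ and integrates by parts inside each double integral $D_{nj}$ before summing — and your explicit check that $V^*$ is absolutely continuous on each $\overline{I_n}$ despite the integrable endpoint singularity of $(V^*)'$ correctly addresses the one genuine technical point.
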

\begin{proof}
In case~1, we see from the representation~\eqref{eq: B repn} that
\[
\int_0^T\iprod{\B_\alpha v,w}\,dt=\sum_{n=1}^N\int_{I_n}
    \iprod{\B_\alpha v,w}\,dt=S_1+S_2+S_3,
\]
where, letting $B^{n,j}=\int_{I_n}\omega_{1+\alpha}(t-t_j)w(t)\,dt$ and
\[
D_{nj}=\int_{I_n}\int_{t_{j-1}}^{\min(t_j,t)} 
	\omega_{1+\alpha}(t-s)\iprod{v'(s),w(t)}\,ds\,dt,
\]
we define
\[
S_1=\sum_{n=1}^N\bigiprod{v^0_+,B^{n,0}},\quad
S_2=\sum_{n=2}^N\sum_{j=1}^{n-1}\bigiprod{[v]^j,B^{n,j}},\quad
S_3=\sum_{n=1}^N\sum_{j=1}^n D_{nj}.
\]
By reversing the order of integration, integrating by parts and then
interchanging variables, we find that for $1\le j\le n-1$,
\[
D_{nj}=\iprod{v^j_-,B^{n,j}}-\iprod{v^{j-1}_+,B^{n,j-1}}
    -\int_{I_j}\biggiprod{v(t),\frac{\partial}{\partial t}\int_{I_n}
        \omega_{1+\alpha}(s-t)w(s)\,ds}\,dt,
\]
whereas 
$D_{nn}=-\iprod{v^{n-1}_+,B^{n,n-1}}
    -\int_{I_n}\bigiprod{v(t),\frac{\partial}{\partial t}\int_t^{t_n}
        \omega_{1+\alpha}(s-t)w(s)\,ds}\,dt$.
Thus, after interchanging the order of summation for the double integrals,
\[
S_3=-\sum_{n=1}^N\iprod{v^{n-1}_+,B^{n,n-1}}+\sum_{n=2}^N\sum_{j=1}^{n-1}
    \bigl(\iprod{v^j_-,B^{n,j}}-\iprod{v^{j-1}_+,B^{n,j-1}}\bigr)
    -\sum_{j=1}^N\int_{I_j}\iprod{v,\B^*_\alpha w}\,dt,
\]
that is,
\begin{align*}
S_3&=\sum_{n=2}^N\sum_{j=1}^{n-1}\iprod{v^j_-,B^{n,j}}
     -\iprod{v^0_+,B^{1,0}}
     -\sum_{n=2}^N\sum_{j=0}^{n-1}\iprod{v^j_+,B^{n,j}}
     -\int_0^T\iprod{v,\B^*_\alpha w}\,dt\\
    &=-S_1-S_2-\int_0^T\iprod{v,\B^*_\alpha w}\,dt,
\end{align*}
so \eqref{eq: adjoint} holds.  In the case $0<\alpha<1$, we simply reverse 
the order of integration.
\end{proof}

The adjoint operator admits a representation analogous to~\eqref{eq: B repn}.

\begin{lemma}
If $-1<\alpha<0$, then $\B^*_\alpha w(t)$ equals
\[
\omega_{1+\alpha}(t_n-t)w^N_-
    -\sum_{j=n}^{N-1}\omega_{1+\alpha}(t_j-t)[w]^j
    -\sum_{j=n}^N\int_{\max(t_{j-1},t)}^{t_j}
    \omega_{1+\alpha}(s-t)w'(s)\,ds
\]
for $w\in C^1(J,\Hilb)$ and $t\in I_n$.
Thus, $\B^*_\alpha w(t)$ is right-continuous at~$t=t_n$ but possesses a
weak singularity~$(t_n-t)^\alpha$ as $t\to t_n^-$.
\end{lemma}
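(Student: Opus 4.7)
The plan is to mirror the derivation of the forward representation~\eqref{eq: B repn}, working from the explicit formula for $\B^*_\alpha$ given in Lemma~\ref{lem: B repn}. Setting $F(t)=\int_t^T\omega_{1+\alpha}(s-t)w(s)\,ds$, so that $\B^*_\alpha w(t)=-F'(t)$, I would first split, for $t\in I_n$,
\[
F(t)=\int_t^{t_n}\omega_{1+\alpha}(s-t)w(s)\,ds
    +\sum_{j=n+1}^N\int_{t_{j-1}}^{t_j}\omega_{1+\alpha}(s-t)w(s)\,ds,
\]
which is legitimate because $w$ is piecewise $C^1$ with jumps only at the time levels. On each sub-interval I would then integrate by parts using $\omega_{1+\alpha}(s-t)=\partial_s\omega_{2+\alpha}(s-t)$ to move the derivative off the kernel and onto $w$, picking up boundary values of the form $\omega_{2+\alpha}(t_j-t)w^j_\pm$ together with integrals of $\omega_{2+\alpha}(s-t)w'(s)$. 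The boundary contribution at the moving lower limit $s=t$ in the first piece vanishes because $\omega_{2+\alpha}(0^+)=0$ (this uses $1+\alpha>0$).

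Next I would telescope the resulting nodal terms. Writing the collected boundary contributions as $\sum_{j=n}^N\omega_{2+\alpha}(t_j-t)w^j_- - \sum_{j=n}^{N-1}\omega_{2+\alpha}(t_j-t)w^j_+$ and rearranging using $w^j_+-w^j_-=[w]^j$ collapses the sum to $\omega_{2+\alpha}(t_N-t)w^N_-\,-\,\sum_{j=n}^{N-1}\omega_{2+\alpha}(t_j-t)[w]^j$. Finally, I would differentiate $-F(t)$ in~$t$: for $j>n$ the sub-integral limits are constant, and for $j=n$ the Leibniz boundary term at $s=t$ again vanishes by $\omega_{2+\alpha}(0^+)=0$; meanwhile $\partial_t\omega_{2+\alpha}(u-t)=-\omega_{1+\alpha}(u-t)$ turns every $\omega_{2+\alpha}$ into $\omega_{1+\alpha}$, yielding the stated formula for $\B^*_\alpha w(t)$.

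For the continuity assertions I would inspect the representation on $I_n$ term by term. Each $\omega_{1+\alpha}(t_j-t)$ with $j>n$ and each integral $\int_{t_{j-1}}^{t_j}\omega_{1+\alpha}(s-t)w'(s)\,ds$ with $j>n$ is continuous on the closed interval $[t_{n-1},t_n]$, and the remaining integral $\int_t^{t_n}\omega_{1+\alpha}(s-t)w'(s)\,ds$ vanishes like $(t_n-t)^{1+\alpha}$; hence the only term that can blow up as $t\to t_n^-$ is $-\omega_{1+\alpha}(t_n-t)[w]^n$, which supplies precisely the weak singularity $(t_n-t)^\alpha$. Right-continuity at $t_n$ follows by applying the same formula on $I_{n+1}$ (whose index range starts at $j=n+1$), every term of which extends continuously to $t=t_n$. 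The main bookkeeping obstacle will be the telescoping step and making sure the endpoint contributions at $s=t$ are handled consistently both during the integration by parts and during the subsequent $t$-differentiation.
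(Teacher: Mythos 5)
Your proposal is correct and follows essentially the same route as the paper: split $\int_t^T$ at the time levels, integrate by parts with $\omega_{2+\alpha}$ as the antiderivative of the kernel (the boundary term at $s=t$ vanishing because $\omega_{2+\alpha}(0)=0$), differentiate in~$t$, and shift indices to collect the nodal terms into jumps. Your first term $\omega_{1+\alpha}(t_N-t)w^N_-$ is indeed what the computation yields; the $t_n$ appearing in that position in the lemma statement is evidently a typographical slip.
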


\begin{proof}
If $t\in I_n$ and $j\ge n+1$, then, integrating by parts,
\[
\int_{I_j}\omega_{1+\alpha}(s-t)w(s)\,ds
    =\omega_{2+\alpha}(t_j-t)w^j_--\omega_{2+\alpha}(t_{j-1}-t)w^{j-1}_+
    -\int_{I_j}\omega_{2+\alpha}(s-t)w'(s)\,ds
\]
and $\int_t^{t_n}\omega_{1+\alpha}(s-t)w(s)\,ds=\omega_{2+\alpha}(t_n-t)w^n_-
-\int_t^{t_n}\omega_{2+\alpha}(s-t)w'(s)\,ds$.
Differentiating these expressions with respect to~$t$, we see from
part~1 of Lemma~\ref{lem: B repn} that $\B_\alpha^*w(t)$ equals
\[
\sum_{j=n}^N\omega_{1+\alpha}(t_j-t)w^j_-
    -\sum_{j=n+1}^N\omega_{1+\alpha}(t_{j-1}-t)w^{j-1}_+
    -\sum_{j=n}^N\int_{\max(t_{j-1},t)}^{t_j}
    \omega_{1+\alpha}(s-t)w'(s)\,ds,
\]
and the result follows after shifting the index in the second sum.
\end{proof}

\subsection{Representation of the nodal error}
Integration by parts in~\eqref{eq: GN def}, together with the
identity~\eqref{eq: adjoint}, shows that for all $v$, $w\in C^1(J,\Hilb)$,
\begin{equation}\label{eq: GN dual}
G_N(v,w)=\iprod{v^N_-,w^N_-}-\sum_{n=1}^{N-1}\iprod{v^n_-,[w]^n}\\
    +\sum_{n=1}^N\int_{I_n}\bigl[-\iprod{v,w'}
        +A(v,\B_\alpha^*w)\bigr]\,dt.
\end{equation}
Since $-\iprod{v,z'}+A(v,\B_\alpha^*z)=\iprod{v,-z'+\B_\alpha^*Az}=0$,
the solution~$z$ of the dual problem~\eqref{eq: dual} satisfies
\begin{equation}\label{eq: dual weak}
G_N(v,z)=\iprod{v^N_-,z_T}
\quad\text{for all $v\in C\bigl(J,D(A^{1/2})\bigr)$.}
\end{equation}
We therefore define the DG solution~$Z\in\Trial$ of~\eqref{eq: dual} by
\begin{equation}\label{eq: dual DG}
G_N(V,Z)=\iprod{V^N_-,Z^N_+}\quad\text{for all $V\in\Trial$,}
\end{equation}
with $Z^N_+=z_T$, and deduce the Galerkin orthogonality property
\begin{equation}\label{eq: dual Gal orthog}
G_N(V,Z-z)=0\quad\text{for all $V\in\Trial$.}
\end{equation}
The following representation is the basis for our analysis of the nodal
error.

\begin{theorem}\label{thm: nodal error}
If $u$~and $z$ are the solutions of the initial-value
problem~\eqref{eq: ivp} and of the dual problem~\eqref{eq: dual},
and if $U$~and $Z$ are the corresponding DG solutions, then
\[
\iprod{U^N_--u(t_N),z_T}=G_N(u-\Pi^-u,Z-z)\quad\text{for every $z_T\in\Hilb$.}
\]
\end{theorem}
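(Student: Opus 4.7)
The plan is to combine the weak form of the dual problem with Galerkin orthogonality of both the primal and dual discretizations, while inserting the quasi-interpolant $\Pi^- u \in \Trial$ so that the error in the second slot of $G_N$ is replaced by an interpolation error.

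Set $e = U - u$. Because $u$ is smooth in time, $e^N_- = U^N_- - u(t_N)$. First, I would apply the integration-by-parts representation~\eqref{eq: GN dual} to $G_N(e, z)$. The dual solution $z$ is continuous across interior nodes, so every $[z]^n$ vanishes; and since $z$ satisfies $-z' + \B_\alpha^* A z = 0$ on each $I_n$, the self-adjointness of $A$ gives
\[
-\iprod{e, z'} + A(e, \B_\alpha^* z)
  = \iprod{e,\, -z' + \B_\alpha^* A z} = 0.
\]
With $z^N_- = z(T) = z_T$, this reduces~\eqref{eq: GN dual} to $G_N(e, z) = \iprod{U^N_- - u(t_N), z_T}$.

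Next, the primal Galerkin orthogonality~\eqref{eq: Gal orthog} with $X = Z$ yields $G_N(e, Z) = 0$, so
\[
G_N(e, Z - z) = -G_N(e, z) = -\iprod{U^N_- - u(t_N), z_T}.
\]
I then split $e = (U - \Pi^- u) - (u - \Pi^- u)$. Because both $U$ and $\Pi^- u$ belong to $\Trial$, the dual Galerkin orthogonality~\eqref{eq: dual Gal orthog} applied with $V = U - \Pi^- u$ yields $G_N(U - \Pi^- u, Z - z) = 0$, and therefore
\[
G_N(e, Z - z) = -G_N(u - \Pi^- u, Z - z).
\]
Equating the two expressions for $G_N(e, Z - z)$ gives the claim.

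The only delicate point is checking that~\eqref{eq: GN dual} is genuinely applicable with $v = e$ and $w = z$: the jumps of $e$ at the interior time levels are absorbed into the $\iprod{e^n_-, [z]^n}$ terms, which vanish because $z$ is continuous; the weak singularity of $\B_\alpha^* z$ near $t_n^-$ identified in the preceding lemma is integrable against the smooth~$e$ on each~$I_n$; and the identity $A(e, \B_\alpha^* z) = \iprod{e, \B_\alpha^* A z}$ relies on the self-adjointness of $A$ together with its commutation with the purely temporal operator~$\B_\alpha^*$, both granted by the spectral assumptions in Section~\ref{sec: preliminaries}.
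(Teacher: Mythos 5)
Your argument is correct and follows essentially the same route as the paper: the weak form of the dual problem (equivalently, \eqref{eq: GN dual} with $[z]^n=0$ and $-z'+\B_\alpha^*Az=0$), primal Galerkin orthogonality with $X=Z$, and dual Galerkin orthogonality with a test function built from $\Pi^-u$. The paper merely arranges the same three ingredients slightly differently (using \eqref{eq: dual DG} with $V=U$ and \eqref{eq: dual Gal orthog} with $V=\Pi^-u$ rather than $V=U-\Pi^-u$), so the two proofs coincide in substance.
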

\begin{proof}
Taking $V=U$ in~\eqref{eq: dual DG} and $v=u$ in~\eqref{eq: dual weak} gives
\[
\iprod{U^N_--u(t_N),z_T}=\iprod{U^N_-,z_T}-\iprod{u(t_N),z_T}
    =G_N(U,Z)-G_N(u,z)=G_N(u,Z-z),
\]
where the last step used the Galerkin orthogonality
property~\eqref{eq: Gal orthog} of~$U$, with~$X=Z$.  
Now use the Galerkin orthogonality
property~\eqref{eq: dual Gal orthog} of~$Z$, with $V=\Pi^-u$.
\end{proof}

\subsection{Error in the DG solution of the dual problem}
We will use the following regularity estimates.

\begin{lemma}\label{lem: dual reg}
For $-1<\alpha<1$ and $0<t<T$, the solution~$z$ of the dual
problem~\eqref{eq: dual} satisfies
\[
\|A^{-1}z'(t)\|+(T-t)\|A^{-1}z''(t)\|\le C(T-t)^\alpha\|z_T\|
\]
and
\[
(T-t)^{1+\alpha}\|Az(t)\|+\|z(t)\|+(T-t)\|z'(t)\|\le C\|z_T\|.
\]
\end{lemma}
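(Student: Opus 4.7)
The plan is to recast the backward dual problem as a forward homogeneous IVP via time reversal, and then obtain each bound by diagonalising in the eigenbasis of~$A$. Introduce $\tau=T-t$ and $\tilde z(\tau)=z(T-\tau)$. A change of variables $s=T-\tau'$ in the convolutions displayed in the preceding lemma turns $\omega_{1+\alpha}(s-t)$ into $\omega_{1+\alpha}(\tau-\tau')$, and $\partial/\partial t=-\partial/\partial\tau$ absorbs the minus sign in the Riemann--Liouville form. Both ranges of~$\alpha$ therefore yield $\B_\alpha^* z(t)=\B_\alpha\tilde z(\tau)$, while $-z'(t)=\tilde z'(\tau)$, so the dual problem becomes the forward problem
\[
\tilde z'(\tau)+\B_\alpha A\tilde z(\tau)=0\quad\text{for $0<\tau<T$,}\quad\tilde z(0)=z_T,
\]
and the target bounds on $z$, $z'$, $z''$, $Az$, $A^{-1}z'$, $A^{-1}z''$ at $t=T-\tau$ become the corresponding bounds on $\tilde z$ at time~$\tau$.

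Next I would expand $\tilde z(\tau)=\sum_m\iprod{z_T,\phi_m}\tilde\psi_m(\tau)\phi_m$, where each scalar mode satisfies $\tilde\psi_m'+\lambda_m\B_\alpha\tilde\psi_m=0$ and $\tilde\psi_m(0)=1$, i.e., $\tilde\psi_m(\tau)=E_{1+\alpha,1}(-\lambda_m\tau^{1+\alpha})$. The key analytic input is the family of uniform bounds, valid for $-1<\alpha<1$, $\lambda_m>0$ and $\tau>0$,
\[
|\tilde\psi_m(\tau)|\le\frac{C}{1+\lambda_m\tau^{1+\alpha}},\quad|\tilde\psi_m'(\tau)|\le\frac{C\lambda_m\tau^\alpha}{1+\lambda_m\tau^{1+\alpha}},\quad|\tilde\psi_m''(\tau)|\le\frac{C\lambda_m\tau^{\alpha-1}}{1+\lambda_m\tau^{1+\alpha}},
\]
which reduce via $x=\lambda_m\tau^{1+\alpha}$ to Pollard-type estimates $|E_{1+\alpha,\beta}(-x)|\le C/(1+x)$ for the Mittag--Leffler family, and can alternatively be read off a Hankel-contour representation of the resolvent $(s^{1+\alpha}+\lambda_m)^{-1}$.

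All five stated inequalities then follow from Parseval. The elementary algebraic inequality $\lambda_m/(1+\lambda_m\tau^{1+\alpha})\le\tau^{-(1+\alpha)}$ gives $\|A\tilde z(\tau)\|\le C\tau^{-(1+\alpha)}\|z_T\|$ and hence the $(T-t)^{1+\alpha}\|Az\|$ bound; dropping the $\lambda_m$ in the first bound, and using $\lambda_m\tau^\alpha/(1+\lambda_m\tau^{1+\alpha})\le\tau^{-1}$ in the second, yields $\|\tilde z(\tau)\|\le C\|z_T\|$ and $\tau\|\tilde z'(\tau)\|\le C\|z_T\|$. For the two $A^{-1}$ bounds I would use the equation itself: since $A$ commutes with $\B_\alpha$, $A^{-1}\tilde z'=-\B_\alpha\tilde z$ and $A^{-1}\tilde z''=-\partial_\tau(\B_\alpha\tilde z)$, so on eigenmodes these are $-\tilde\psi_m'/\lambda_m$ and $-\tilde\psi_m''/\lambda_m$, controlled by $C\tau^\alpha$ and $C\tau^{\alpha-1}$ uniformly in~$\lambda_m$ by the second and third displayed bounds. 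Substituting $t=T-\tau$ delivers the lemma.

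The main obstacle I anticipate is the second-derivative bound on $\tilde\psi_m$ in the sub-diffusion range $-1<\alpha<0$, where differentiating the weakly singular convolution in $\B_\alpha$ produces a $\tau^{\alpha-1}$ factor, and one must carefully retain the full $1/(1+\lambda_m\tau^{1+\alpha})$ denominator rather than fall back on the crude bound $C\lambda_m\tau^{\alpha-1}$. A Hankel-contour Laplace inversion, which handles both sign ranges of~$\alpha$ uniformly and automatically yields the resolvent denominator, is in my experience the cleanest way past this difficulty; direct manipulation of the power-series definition of $E_{1+\alpha,1}$ works but is considerably more tedious.
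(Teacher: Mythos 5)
Your proposal is correct, and its opening move coincides with the paper's: conjugating by the time reversal $\Rev v(t)=v(T-t)$, using $\Rev\partial_t=-\partial_t\Rev$ and $\Rev\B_\alpha^*=\B_\alpha\Rev$, so that the dual problem becomes the forward homogeneous problem $\tilde z'+\B_\alpha A\tilde z=0$ with $\tilde z(0)=z_T$. Where you diverge is in what happens next. The paper applies $\Rev$ to $A^{-1}z$ and to $z$ and then simply \emph{cites} known regularity theorems for the forward problem (McLean 2010, Theorems 4.1--4.2, for $-1<\alpha<0$; McLean--Mustapha 2007, Theorem 2.1, for $0<\alpha<1$), which give $\|v'(t)\|+t\|v''(t)\|\le Ct^\alpha\|Av(0)\|$ with $v=\Rev A^{-1}z$ and $Av(0)=z_T$, plus the corresponding bounds on $w=\Rev z$. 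You instead re-derive these estimates from scratch by diagonalising in the eigenbasis of $A$, identifying each mode as $E_{1+\alpha}(-\lambda_m\tau^{1+\alpha})$, and invoking the uniform bounds $|E_{1+\alpha,\beta}(-x)|\le C/(1+x)$ together with their differentiated versions. This is legitimate here precisely because the paper assumes $A$ is self-adjoint with a complete eigensystem, and your Parseval computations do deliver all five inequalities; indeed the derivative bounds you need are essentially the estimate the paper itself quotes later as~\eqref{eq: bound ml}. What the citation route buys is generality (the referenced theorems are proved by Laplace-transform and resolvent methods valid for sectorial $A$) and the avoidance of re-justifying the second-derivative Mittag--Leffler bound for $-1<\alpha<0$, which you rightly flag as the delicate point. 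A small simplification worth borrowing from the paper: rather than routing $A^{-1}\tilde z'$ and $A^{-1}\tilde z''$ through the equation $A^{-1}\tilde z'=-\B_\alpha\tilde z$, observe that $v=\Rev A^{-1}z$ itself solves the forward problem with initial data $A^{-1}z_T$, so the first displayed estimate of the lemma is just the forward regularity bound applied verbatim to $v$.
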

\begin{proof}
Define the time reversal operator~$\Rev v(t)=v(T-t)$.  Since 
$\Rev\partial_t=-\partial_t\Rev$
and $\Rev\B_\alpha^*=\B_\alpha\Rev$, we deduce from~\eqref{eq: dual} that
the function~$v=\Rev A^{-1}z$ satisfies
\[
v'+\B_\alpha Av=0\quad\text{for $0<t<T$,}\quad
    \text{with $v(0)=A^{-1}z_T$.}
\]
Known results for $-1<\alpha<0$ \cite[Theorem~4.2]{McLean2010}
and $0<\alpha<1$ \cite[Theorem~2.1]{McLeanMustapha2007} give
$\|v'(t)\|+t\|v''(t)\|\le Ct^\alpha\|Av(0)\|
=Ct^\alpha\|z_T\|$, implying the first estimate.  
Similarly \cite[Theorem~4.1]{McLean2010}, the function~$w=\Rev z$ satisfies
\[
t^{1+\alpha}\|Aw(t)\|+\|w(t)\|+t\|w'(t)\|\le C\|w(0)\|=C\|z_T\|,
\]
implying the second estimate.
\end{proof}

To investigate the DG error for the dual problem, we make the splitting
\begin{equation}\label{eq: split Z-z}
A^{-1}(Z-z)=\zeta+\Theta\quad
\text{where $\zeta=A^{-1}(\Pi^+z-z)$ and $\Theta=A^{-1}(Z-\Pi^+z)\in\Trial$.}
\end{equation}

\begin{lemma}\label{lem: zeta}
The function~$\zeta$ in~\eqref{eq: split Z-z} satisfies
$\|\zeta\|_J\le Ct_N^{\alpha_+}k^{1+\alpha_-}\|z_T\|$.
\end{lemma}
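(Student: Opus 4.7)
The plan is to apply the interpolation-error bound \eqref{eq: ||Pi v-v||} with $r=1$ to the function $v=A^{-1}z$. Since $A$ is time-independent it commutes with $\Pi^+$, so $\zeta=\Pi^+v-v$, and
\[
\|\zeta\|_{I_n}\le 3\int_{I_n}\|v'(t)\|\,dt=3\int_{I_n}\|A^{-1}z'(t)\|\,dt.
\]
Substituting the regularity estimate $\|A^{-1}z'(t)\|\le C(T-t)^\alpha\|z_T\|$ from Lemma~\ref{lem: dual reg}, the entire task reduces to establishing the scalar inequality
\[
\int_{I_n}(T-t)^\alpha\,dt\le Ct_N^{\alpha_+}k^{1+\alpha_-}\quad\text{for every $n$.}
\]

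I would then split on the sign of~$\alpha$. When $0<\alpha<1$, so that $\alpha_+=\alpha$~and $\alpha_-=0$, the integrand is bounded by~$T^\alpha$ and the integral is at most $k_nT^\alpha\le k\,t_N^{\alpha_+}$, in exactly the required form. When $-1<\alpha<0$ the integrand is singular at~$t=T$, so $\|v'\|_{I_N}=\infty$ and the sup form of~\eqref{eq: ||Pi v-v||} is useless on~$I_N$; this is precisely where the integral form is indispensable. Computing the antiderivative,
\[
\int_{I_n}(T-t)^\alpha\,dt=\frac{(T-t_{n-1})^{1+\alpha}-(T-t_n)^{1+\alpha}}{1+\alpha},
\]
and since $0<1+\alpha<1$ the power $x\mapsto x^{1+\alpha}$ is concave and hence subadditive on~$[0,\infty)$, so $(T-t_{n-1})^{1+\alpha}=(T-t_n+k_n)^{1+\alpha}\le(T-t_n)^{1+\alpha}+k_n^{1+\alpha}$. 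The integral is therefore at most $k_n^{1+\alpha}/(1+\alpha)\le Ck^{1+\alpha_-}$, while $t_N^{\alpha_+}=1$ in this regime, so the desired inequality holds here as well. Taking the maximum over $1\le n\le N$ then yields the bound $\|\zeta\|_J\le Ct_N^{\alpha_+}k^{1+\alpha_-}\|z_T\|$.

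The only genuine subtlety is the weak singularity of~$z'$ at~$t=T$ when $\alpha<0$: it is what forces the use of the integral (rather than pointwise) form of \eqref{eq: ||Pi v-v||} on the terminal interval~$I_N$, and it is the concavity of~$x^{1+\alpha}$ that produces the correct $k_n^{1+\alpha}$ scaling in place of the naive $k_n(T-t_n)^\alpha$ that one would obtain from a pointwise bound. Everything else is direct computation from the regularity estimate in Lemma~\ref{lem: dual reg}.
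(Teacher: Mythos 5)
Your proof is correct and follows essentially the same route as the paper: the integral form of \eqref{eq: ||Pi v-v||} applied to $A^{-1}z$, the regularity bound from Lemma~\ref{lem: dual reg}, and then the elementary estimate $(t_N-t_{n-1})^{1+\alpha}-(t_N-t_n)^{1+\alpha}\le k_n^{1+\alpha}$ for $\alpha<0$ (which is exactly your subadditivity observation) versus the crude bound $k_nt_N^\alpha$ for $\alpha>0$. Nothing further is needed.
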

\begin{proof}
By~\eqref{eq: ||Pi v-v||} and Lemma~\ref{lem: dual reg},
$\|\zeta\|_J$ is bounded by
\[
\|(\Id-\Pi^+)A^{-1}z\|_J
    \le3\max_{1\le n\le N}\int_{I_n}\|A^{-1}z'(t)\|\,dt
    \le C\|z_T\|\max_{1\le n\le N}\int_{I_n}(t_N-t)^\alpha\,dt.
\]
If $-1<\alpha<0$, then $(1+\alpha)\int_{I_n}(t_N-t)^\alpha\,dt
=(t_N-t_{n-1})^{1+\alpha}-(t_N-t_n)^{1+\alpha}\le k_n^{1+\alpha}$,
whereas if $0<\alpha<1$, then $\int_{I_n}(t_N-t)^\alpha\,dt\le k_nt_N^\alpha$.
\end{proof}

\begin{lemma}\label{lem: Theta}
The function~$\Theta$ in~\eqref{eq: split Z-z} satisfies
$\|\Theta\|_J^2\le8\bigl|\int_0^T
\iprod{\Theta(t),\B_\alpha^*A\zeta}\,dt\bigr|$.
\end{lemma}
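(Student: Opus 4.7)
The plan is to identify $\Theta$ as the DG approximation of a dual problem whose source is $-\B_\alpha^*A\zeta$, and then to invoke the analogue of Theorem~\ref{thm: stability} in which the unknown sits in the second slot of $G_N$.

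First, since $Z-z=A(\zeta+\Theta)$, the Galerkin orthogonality \eqref{eq: dual Gal orthog} gives $G_N(V,A\Theta)=-G_N(V,A\zeta)$ for every $V\in\Trial$. Expanding $G_N(V,A\zeta)$ via the forward representation \eqref{eq: GN def}, the defining conditions \eqref{eq: Pi+ properties} of $\Pi^+$ force most terms to vanish: the identity $\zeta(t_{n-1}^+)=0$ kills the nodal and jump contributions $\iprod{V^0_+,(A\zeta)^0_+}$ and $\iprod{[V]^n,(A\zeta)^n_+}$, while the mean-value condition $\int_{I_n}\zeta\,dt=0$ combined with the piecewise constancy of $V'$ kills each $\int_{I_n}\iprod{V',A\zeta}\,dt$. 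What remains is $G_N(V,A\zeta)=\int_0^T A(\B_\alpha V,A\zeta)\,dt$.

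The algebraic heart of the proof is the substitution $V=A^{-1}Y$ with $Y\in\Trial$; note that $A^{-1}Y\in\Trial$ because $A^{-1}$ is bounded on~$\Hilb$. A direct term-by-term comparison in \eqref{eq: GN def}, using the self-adjointness of $A$ and the commutation $A\B_\alpha=\B_\alpha A$, yields the identity $G_N(A^{-1}Y,A\Theta)=G_N(Y,\Theta)$. The same manoeuvre on the right-hand side gives $\int_0^T A(\B_\alpha A^{-1}Y,A\zeta)\,dt=\int_0^T\iprod{\B_\alpha Y,A\zeta}\,dt$, which by the adjoint identity \eqref{eq: adjoint} equals $\int_0^T\iprod{Y,\B_\alpha^*A\zeta}\,dt$. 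Putting these together,
\[
G_N(Y,\Theta)=-\int_0^T\iprod{Y,\B_\alpha^*A\zeta}\,dt
\quad\text{for every $Y\in\Trial$,}
\]
which exhibits $\Theta$ as the DG solution of a dual problem with zero terminal data and source~$-\B_\alpha^*A\zeta$.

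Finally, I would apply the second-slot analogue of Theorem~\ref{thm: stability}, obtained by an identical coercivity argument based on the dual form \eqref{eq: GN dual} of $G_N$ together with the nonnegativity $\int_0^T A(w,\B_\alpha^*w)\,dt=\int_0^T A(\B_\alpha w,w)\,dt\ge 0$. With $Y=\Theta$ this yields $\|\Theta\|_J^2\le 8\bigl|\int_0^T\iprod{\Theta,\B_\alpha^*A\zeta}\,dt\bigr|$, as required. The main obstacle is justifying the $A^{-1}$ trick in the second step: it hinges crucially on $A$ being self-adjoint, strictly positive, and commuting with~$\B_\alpha$, and it is exactly what lets us convert an equation for $A\Theta$ in the $A$-weighted pairing into an equation for $\Theta$ in the plain $\Hilb$-pairing, the format required by the stability bound.
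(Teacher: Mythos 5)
Your proposal is correct and follows essentially the same route as the paper: Galerkin orthogonality plus the identity $G_N(A^{-1}v,w)=G_N(v,A^{-1}w)$, the two defining properties of~$\Pi^+$ to kill the nodal, jump and $V'$ terms, the identification of~$\Theta$ as the DG solution of a dual problem with source~$-\B_\alpha^*A\zeta$ and zero terminal data, and the stability bound of Theorem~\ref{thm: stability} (which the paper invokes via time reversal rather than a ``second-slot analogue''). The only cosmetic difference is that you expand $G_N(V,A\zeta)$ with the forward form~\eqref{eq: GN def} and apply the adjoint identity at the end, whereas the paper expands $G_N(V,\zeta)$ with the dual form~\eqref{eq: GN dual} and integrates by parts.
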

\begin{proof}
By~\eqref{eq: dual Gal orthog}, $G_N(V,\zeta+\Theta)=G(A^{-1}V,Z-z)=0$
for all $V\in\Trial$,
where we used the identity $G_N(v,A^{-1}w)=G_N(A^{-1}v,w)$ and the fact that
$A^{-1}V\in\Trial$.  Thus,
\[
G_N(V,\Theta)=-G(V,\zeta)\quad\text{for all $V\in\Trial$.}
\]
Since $\zeta^n_+=0$ for~$0\le n\le N-1$, the formula~\eqref{eq: GN dual}
shows
\[
G_N(V,\zeta)=\sum_{n=1}^N\iprod{V^n_-,\zeta^n_-}+\sum_{n=1}^N
    \int_{I_n}\bigl[-\iprod{V,\zeta'}
    +A(V,\B_\alpha^*\zeta)\bigr]\,dt,
\]
and integration by parts gives $\int_{I_n}\iprod{V,\zeta'}\,dt
=\iprod{V^n_-,\zeta^n_-}-\int_{I_n}\iprod{V',\zeta}\,dt=\iprod{V^n_-,\zeta^n_-}$,
where, in the last step, we used the second property
in~\eqref{eq: Pi+ properties} and the fact that $V'$ is constant on~$I_n$.
Thus, if we define $g=-A\B_\alpha^*\zeta=-\B_\alpha^*A\zeta$ then
\[
G_N(V,\Theta)=-\int_0^TA(V,\B_\alpha^*\zeta)\,dt
    =\int_0^T\iprod{V,g}\,dt
\quad\text{for all $V\in\Trial$,}
\]
which means that $\Theta\in\Trial$ is the DG solution of
$-\theta'+\B_\alpha^*A\theta=g(t)$ for $0<t<T$, with~$\theta(T)=0$.
The desired estimate follows by the stability of~$\Theta$, which we can
prove by applying Theorem~\ref{thm: stability} to~$\Rev\Theta(t)=\Theta(T-t)$.
\end{proof}

Recall that $\logp(t)=\max(1,|\log t|)$.

\begin{lemma}\label{lem: bilinear}
If $-1<\alpha<1$ then
\[
\biggl|\int_0^T\iprod{V,\B_\alpha^*A\zeta}\,dt\biggr|
    \le Ct_N^{\alpha_+}k^{1+\alpha_-}\logp(t_N/k_N)\|V\|_J\|z_T\|
    \quad\text{for all $V\in\Trial$.}
\]
\end{lemma}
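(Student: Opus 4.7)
The plan is to invoke the adjoint identity of Lemma~\ref{lem: B repn} to trade $\B_\alpha^*A\zeta$ against $V$ for $A\zeta$ against $\B_\alpha V$, and then exploit the vanishing mean of $A\zeta$ on each subinterval, which comes from the defining property~\eqref{eq: Pi+ properties} of $\Pi^+$. Since $V\in\Trial\subset C^1(J,\Hilb)$ and $A\zeta=A(\Pi^+z-z)\in C^1(J,\Hilb)$, Lemma~\ref{lem: B repn} yields
\[
\int_0^T\iprod{V,\B_\alpha^* A\zeta}\,dt = \int_0^T\iprod{\B_\alpha V, A\zeta}\,dt
= \sum_{n=1}^N\int_{I_n}\iprod{\B_\alpha V(t)-c_n,A\zeta(t)}\,dt,
\]
for any constants $c_n\in\Hilb$, because applying $A$ to the mean-value condition in~\eqref{eq: Pi+ properties} gives $\int_{I_n}A\zeta\,dt = 0$. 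I would choose $c_n=\B_\alpha V(\tau_n)$ for a convenient $\tau_n\in I_n$ (e.g.\ the midpoint, since the left endpoint is singular when $-1<\alpha<0$), and then estimate each interval contribution by $\|\B_\alpha V-c_n\|_{I_n}\,\|A\zeta\|_{L_1(I_n)}$.

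To bound $\|\B_\alpha V-c_n\|_{I_n}$ I would use the explicit formula~\eqref{eq: B repn} for $\B_\alpha V$ (and the corresponding formula in the $0<\alpha<1$ case), splitting it into a local diagonal piece from the integral over $I_n$---H\"older-regular of exponent $\alpha_+$ and size $O(k_n^{\alpha_+}\|V\|_J)$---and a smooth history piece from earlier intervals and the initial datum, whose oscillation on $I_n$ is controlled by the mean-value theorem applied to $\omega_{1+\alpha}(t-t_j)$; this supplies an $O(k_n)$ factor that compensates for the $(t-t_j)^{\alpha-1}$ singularity. To bound $\|A\zeta\|_{L_1(I_n)}$ for $n<N$ I would combine the interpolation-error representations~\eqref{eq: error Pi plus} with the estimate $\|z'(t)\|\le C(T-t)^{-1}\|z_T\|$ supplied by Lemma~\ref{lem: dual reg}. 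Summed across the first $N-1$ intervals, the contributions telescope through $\int_0^{t_{N-1}}(T-t)^{-1}\,dt = \log(T/k_N)$, producing the logarithmic factor $\logp(t_N/k_N)$, while the overall $t_N^{\alpha_+}$ factor arises from the global size of the convolution kernel $\omega_{1+\alpha}$ on $[0,T]$ when $\alpha>0$.

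The hard part will be the last interval $I_N$, where $\|z'\|_{L_1(I_N)}$ diverges logarithmically and the direct $\|A\zeta\|_{L_1(I_N)}$ bound breaks down. My remedy is to rewrite $\iprod{\B_\alpha V,A\zeta} = A(\B_\alpha V,\zeta)$ on $I_N$ and exploit the analogous zero-mean identity $\int_{I_N}\zeta\,dt = 0$ (again from~\eqref{eq: Pi+ properties}) to subtract off a constant in the $A$-pairing, then appeal to the sharper estimate $\|\zeta\|_{I_N}\le Ck_N^{1+\alpha_-}\|z_T\|$ from Lemma~\ref{lem: zeta}, which rests on $\|A^{-1}z'(t)\|\le C(T-t)^\alpha\|z_T\|$ and is integrable over $I_N$ for every $-1<\alpha<1$. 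This contribution produces the factor $k^{1+\alpha_-}$ in the final bound; combined with the history estimates from the first two paragraphs it yields the claimed inequality.
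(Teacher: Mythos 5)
Your overall strategy --- pass $\B_\alpha^*$ back onto $V$ via the adjoint identity, subtract interval-wise constants using $\int_{I_n}A\zeta\,dt=0$, and telescope the $(T-t)^{-1}$ bound on $\|z'\|$ into the logarithm --- is genuinely different from the paper's. The paper instead writes $\B_\alpha V=(\B_{1+\alpha}V)'$ for $\alpha<0$, integrates by parts on each $I_n$ using $\zeta^{n-1}_+=0$, and pairs the \emph{continuous} increment $\Delta^n(t)=\B_{1+\alpha}V(t_n)-\B_{1+\alpha}V(t)$, which satisfies $\|\Delta^n(t)\|\le2\|V\|_J\,\omega_{2+\alpha}(t_n-t)$, against $\int_{I_n}\|A\zeta'\|\,dt\le C\int_{I_n}\|z'\|\,dt$. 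Your route can be made to work on the interior intervals, but only with the norms paired the other way around from what you wrote: for $-1<\alpha<0$ the representation~\eqref{eq: B repn} shows that $\B_\alpha V$ contains the term $\omega_{1+\alpha}(t-t_{n-1})[V]^{n-1}\sim(t-t_{n-1})^\alpha$, so $\|\B_\alpha V-c_n\|_{I_n}=\infty$ whenever $[V]^{n-1}\ne0$ and the product $\|\B_\alpha V-c_n\|_{I_n}\,\|A\zeta\|_{L_1(I_n)}$ is meaningless; for the same reason the ``local piece'' is not of size $O(k_n^{\alpha_+}\|V\|_J)$ in the sup norm when $\alpha<0$. The repair is to put the sup norm on $A\zeta$ (namely $\|A\zeta\|_{I_n}\le3\int_{I_n}\|z'\|\,dt$) and the $L_1(I_n)$ norm on $\B_\alpha V-c_n$, which is $O(k_n^{1+\alpha_-}\|V\|_J)$ because $(t-t_{n-1})^\alpha$ is integrable; this does recover the factor $k^{1+\alpha_-}\logp(t_N/k_N)$ for $n\le N-1$.

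The more serious gap is your treatment of $I_N$. Rewriting $\iprod{\B_\alpha V,A\zeta}=A(\B_\alpha V,\zeta)$ so as to exploit $\|\zeta\|_{I_N}$ forces you to estimate $A(\B_\alpha V-c_N,\zeta)$ by $\|\B_\alpha (AV)-Ac_N\|\,\|\zeta\|$ (or by the symmetric $A^{1/2}$--$A^{1/2}$ pairing), and $\|AV\|$ is not controlled by $\|V\|_J$ for an arbitrary $V\in\Trial$ (take $V=\phi_m$ with $\lambda_m$ large). Since the lemma must hold with a constant depending on $V$ only through $\|V\|_J$, this step cannot close. It is also unnecessary: on $I_N$ the representation~\eqref{eq: error Pi plus} together with $\|z'(t)\|\le C(T-t)^{-1}\|z_T\|$ gives the pointwise bound $\|A\zeta(t)\|\le C\|z_T\|\bigl(1+\log\bigl(k_N/(t_N-t)\bigr)\bigr)$, which is integrable against the weight $(t-t_{N-1})^{\alpha_-}$ carried by $\B_\alpha V-c_N$ and yields the required $Ck_N^{1+\alpha_-}\|V\|_J\|z_T\|$ directly in the $\Hilb$-pairing. (The paper sidesteps the issue: its integrand on $I_N$ is $\omega_{2+\alpha}(t_N-t)\|A\zeta'(t)\|\le C(t_N-t)^\alpha\|z_T\|$, already integrable.) With these two corrections your argument would constitute a valid alternative proof.
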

\emph{Proof}.
Suppose first that $-1<\alpha<0$.
Since $\B_\alpha V=(\B_{1+\alpha}V)'$ and $\zeta^{n-1}_+=0$, we see
using \eqref{eq: adjoint} and integrating by parts that
\[
\int_0^T\iprod{V,\B^*_\alpha A\zeta}\,dt=\sum_{n=1}^N\int_{I_n}
    \iprod{(\B_{1+\alpha}V)',A\zeta}\,dt
    =\sum_{n=1}^N\int_{I_n}\iprod{\Delta^n,A\zeta'}\,dt,
\]
where, for $t\in I_n$,
\begin{align*}
\Delta^n(t)&=\B_{1+\alpha}V(t_n)-\B_{1+\alpha}V(t)\\
    &=\int_0^t[\omega_{1+\alpha}(t_n-s)-\omega_{1+\alpha}(t-s)]V(s)\,ds
    +\int_t^{t_n}\omega_{1+\alpha}(t_n-s)V(s)\,ds.
\end{align*}
The function~$\omega_{1+\alpha}$ is monotone decreasing whereas
$\omega_{2+\alpha}$ is monotone increasing, so
\begin{align*}
\|\Delta^n(t)\|&\le\|V\|_J\biggl(\int_0^t
    \bigl|\omega_{1+\alpha}(t_n-s)-\omega_{1+\alpha}(t-s)\bigr|\,ds
    +\int_t^{t_n}\omega_{1+\alpha}(t_n-s)\,ds\biggr)\\
    &=\|V\|_J\bigl[\omega_{2+\alpha}(t)-\omega_{2+\alpha}(t_n)
        +2\omega_{2+\alpha}(t_n-t)\bigr]
    \le2\|V\|_J\omega_{2+\alpha}(t_n-t)
\end{align*}
and the Cauchy--Schwarz inequality shows that
$\bigl|\int_0^T\iprod{V,\B_\alpha^*A\zeta}\,dt\bigr|$ is bounded by
\[
2\|V\|_J\biggl(\sum_{n=1}^{N-1}\omega_{2+\alpha}(k_n)
    \int_{I_n}\|A\zeta'\|\,dt
    +\int_{I_N}\omega_{2+\alpha}(t_N-t)\|A\zeta'\|\,dt\biggr).
\]
The integral representation of the interpolation
error~\eqref{eq: error Pi plus} and Lemma~\ref{lem: dual reg} imply
\begin{align*}
\sum_{n=1}^{N-1}\omega_{2+\alpha}(k_n)\int_{I_n}\|A\zeta'\|\,dt
    &\le C\sum_{n=1}^{N-1}k_n^{1+\alpha}\int_{I_n}\|z'\|\,dt\\
    &\le Ck^{1+\alpha}\|z_T\|\int_0^{T-k_N}(T-t)^{-1}\,dt
    =Ck^{1+\alpha}\log\frac{t_N}{k_N}
\end{align*}
and $\int_{I_N}\omega_{2+\alpha}(t_N-t)\|A\zeta'\|\,dt
\le C\|z_T\|\int_{I_N}(t_N-t)^\alpha\,dt\le C\|z_T\|k_N^{1+\alpha}$.
The desired estimate follows at once.

Now let $0<\alpha<1$.
By part~2 of Lemma~\ref{lem: B repn},
\begin{align*}
\biggl|\int_0^T&\iprod{V,\B_\alpha^*A\zeta}\,dt\biggr|
    \le\int_0^T\|V(t)\|\int_t^T\omega_\alpha(s-t)\|A\zeta(s)\|\,ds\,dt\\
    &\le\|V\|_J\int_0^T\|A\zeta(s)\|\int_0^s\omega_\alpha(s-t)\,dt\,ds
    =\|V\|_J\int_0^T\|A\zeta(s)\|\omega_{1+\alpha}(s)\,ds\\
    &\le CT^\alpha\|V\|_J\int_0^T\|A\zeta(t)\|\,dt.
\end{align*}
The estimates \eqref{eq: ||Pi v-v||}~and \eqref{eq: Pi norm} imply that
\[
\int_0^T\|A\zeta(t)\|\,dt\le\sum_{n=1}^N k_n\|A\zeta\|_{I_n}
    \le4k_N\|z\|_{I_N}+3\sum_{n=1}^{N-1}k_n\int_{I_n}\|z'(t)\|\,dt,
\]
and we know from Lemma~\ref{lem: dual reg} that $\|z\|_{I_N}\le C\|z_T\|$ and
\[
\sum_{n=1}^{N-1}k_n\int_{I_n}\|z'(t)\|\,dt
    \le Ck_n\|z_T\|\int_0^{t_{N-1}}(t_N-t)^{-1}\,dt
    =Ck_N\|z_T\|\log(t_N/k_N).\eqno\endproof
\]

Hence, we arrive at the following error estimate for the dual problem.

\begin{theorem}\label{thm: weak ||Z-z||}
Let $z$ denote the solution of the dual problem~\eqref{eq: dual}, and let
$Z$ denote the DG solution defined by~\eqref{eq: dual DG}.  Then,
for $-1<\alpha<1$,
\[
\|A^{-1}(Z-z)\|_J\le Ct_N^{\alpha_+}k^{1+\alpha_-}\logp(t_N/k_N)\|z_T\|.
\]
\end{theorem}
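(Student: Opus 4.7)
The plan is to assemble the estimate directly from the three preparatory lemmas that have just been proved, since they have already isolated all the difficult analytical work. I would begin with the splitting $A^{-1}(Z-z)=\zeta+\Theta$ introduced in~\eqref{eq: split Z-z} and apply the triangle inequality, so that the task reduces to bounding $\|\zeta\|_J$ and $\|\Theta\|_J$ separately.

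The first term is immediate: Lemma~\ref{lem: zeta} yields $\|\zeta\|_J \le C t_N^{\alpha_+}k^{1+\alpha_-}\|z_T\|$, which is already of the desired form (in fact without the logarithmic factor). For the second term I would chain Lemma~\ref{lem: Theta} with Lemma~\ref{lem: bilinear}. Namely, Lemma~\ref{lem: Theta} gives
\[
\|\Theta\|_J^2 \le 8\left|\int_0^T\iprod{\Theta(t),\B_\alpha^*A\zeta}\,dt\right|,
\]
and since $\Theta\in\Trial$, Lemma~\ref{lem: bilinear} applied with $V=\Theta$ bounds the right-hand side by $C t_N^{\alpha_+}k^{1+\alpha_-}\logp(t_N/k_N)\|\Theta\|_J\|z_T\|$. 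Dividing through by $\|\Theta\|_J$ (the bound is trivial if $\Theta\equiv0$) yields
\[
\|\Theta\|_J \le C t_N^{\alpha_+}k^{1+\alpha_-}\logp(t_N/k_N)\|z_T\|.
\]

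Combining the two estimates via the triangle inequality and absorbing the better $\zeta$-bound into the worse $\Theta$-bound completes the proof. There is essentially no obstacle to overcome at this stage: the only subtle points, namely the weak singularity of $\B_\alpha^*A\zeta$ near $t=t_N$ (when $\alpha<0$) and the sharp logarithmic accumulation over the other intervals, have already been handled inside Lemma~\ref{lem: bilinear}. The remaining work is purely bookkeeping.
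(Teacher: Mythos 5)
Your proposal is correct and follows exactly the paper's own argument: triangle inequality on the splitting $A^{-1}(Z-z)=\zeta+\Theta$, Lemma~\ref{lem: zeta} for $\zeta$, and Lemma~\ref{lem: Theta} chained with Lemma~\ref{lem: bilinear} (taking $V=\Theta$ and cancelling one factor of $\|\Theta\|_J$) for $\Theta$. Nothing is missing.
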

\begin{proof}
The splitting~\eqref{eq: split Z-z} implies that
$\|A^{-1}(Z-z)\|_J\le\|\zeta\|_J+\|\Theta\|_J$, and we estimate these two
terms using Lemmas \ref{lem: zeta}, \ref{lem: Theta}~and
\ref{lem: bilinear}.
\end{proof}
%
\section{Nodal superconvergence}\label{sec: superconv}
%
With the help of Theorems \ref{thm: nodal error}~and
\ref{thm: weak ||Z-z||},
we are now able to estimate the error in the
approximation~$U^n_-\approx u(t_n)$.  Define
\begin{equation}\label{eq: E}
\epsilon(u)=D_1+E_1+\max_{2\le j\le n}k_jD_j
    +\sum_{j=2}^nk_j^{2+\alpha_-}E_j,
\end{equation}
where
\begin{equation}\label{eq: D1 E1}
D_1=\int_{I_1}\|Au'(t)\|\,dt
\quad\text{and}\quad
E_1=\int_{I_1}t^{1+\alpha_-}\|A^2u'(t)\|\,dt,
\end{equation}
with
\begin{equation}\label{eq: Dn En}
D_n=\int_{I_n}\|Au''(t)\|\,dt
\quad\text{and}\quad
E_n=\int_{I_n}\|A^2u''(t)\|\,dt
\quad\text{for $2\le n\le N$.}
\end{equation}

\begin{theorem}\label{thm: ||nodal error||}
Let $u$ be the solution of the initial value problem~\eqref{eq: ivp} and
let $U$ be the DG solution satisfying~\eqref{eq: DG step}.  Then,
for $1\le n\le N$,
\begin{equation}\label{eq: nodal error}
\|U^n_--u(t_n)\|\le Ct_n^{2\alpha_+}k^{1+\alpha_-}\logp(t_n/k_n)\,
    \epsilon(u).
\end{equation}
\end{theorem}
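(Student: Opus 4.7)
The strategy is duality. For arbitrary $z_T \in \Hilb$, regard the dual problem~\eqref{eq: dual} as posed on $(0, t_n)$ with terminal value $z_T$, and let $Z \in \Trial$ be its DG approximation. Theorem~\ref{thm: nodal error}, applied with $n$ in place of $N$, gives
\[
\iprod{U^n_- - u(t_n), z_T} = G_n(u - \Pi^- u, Z - z),
\]
so the task reduces to bounding the right-hand side by $C t_n^{2\alpha_+} k^{1+\alpha_-}\logp(t_n/k_n)\epsilon(u)\|z_T\|$; taking the supremum over $\|z_T\| \le 1$ then yields~\eqref{eq: nodal error}.

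To evaluate $G_n(u - \Pi^- u, Z - z)$, I would rewrite it in the dual form of~\eqref{eq: GN dual} (with $n$ for $N$). The interpolation identity $\Pi^- u(t_j^-) = u(t_j^-)$ in~\eqref{eq: Pi properties} wipes out every nodal contribution, leaving
\[
G_n(u - \Pi^- u, Z - z) = \sum_{j=1}^n \int_{I_j}\bigl[-\iprod{u - \Pi^- u, (Z - z)'} + A\bigl(u - \Pi^- u, \B^*_\alpha(Z - z)\bigr)\bigr]\,dt.
\]
For the first integrand, decompose $Z - z = A\Theta + A\zeta$ as in~\eqref{eq: split Z-z}. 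Since $(A\Theta)'$ is constant on $I_j$ and $\int_{I_j}(u - \Pi^- u)\,dt = 0$ by~\eqref{eq: Pi properties}, the $A\Theta$ piece drops out and only the interpolation error $(\Pi^+ z - z)' = A\zeta'$ survives. Transferring $A$ onto $u - \Pi^- u$ by self-adjointness and applying Cauchy--Schwarz with the integral representation~\eqref{eq: error Pi minus} (the $u''$-form for $j \ge 2$, producing $D_j$; the $u'$-form on $I_1$ to accommodate the mild blow-up of $Au''$ at the origin, producing $D_1$) together with the dual regularity $\|A^{-1}z'\|\le C(t_n - t)^\alpha \|z_T\|$ from Lemma~\ref{lem: dual reg} delivers the $D_1 + \max_{2 \le j \le n} k_j D_j$ part of $\epsilon(u)$.

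For the second integrand, self-adjointness of $A$ combined with the adjoint relation~\eqref{eq: adjoint} applied on $(0, t_n)$ rewrites the contribution as $\int_0^{t_n}\iprod{A^2\B_\alpha(u - \Pi^- u), A^{-1}(Z - z)}\,dt$, which is at most $\|A^{-1}(Z - z)\|_J \int_0^{t_n}\|A^2\B_\alpha(u - \Pi^- u)\|\,dt$. Theorem~\ref{thm: weak ||Z-z||} controls the first factor by $C t_n^{\alpha_+} k^{1+\alpha_-}\logp(t_n/k_n)\|z_T\|$. For the second, expand $\B_\alpha$ via its kernel and substitute~\eqref{eq: error Pi minus}: the $u'$-form on $I_1$ produces $E_1$, while the $u''$-form on $I_j$ for $j \ge 2$, after integrating against $\omega_{1+\alpha}$, produces $\sum_{j=2}^n k_j^{2+\alpha_-} E_j$. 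Kernel estimates in the case $0 < \alpha < 1$ (of the type $\omega_{1+\alpha}(s) \le C t_n^\alpha$) supply an additional $t_n^{\alpha_+}$ that, multiplied with the dual bound, produces the $t_n^{2\alpha_+}$ prefactor in~\eqref{eq: nodal error}.

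The principal technical hurdle is the estimate of $\int_0^{t_n}\|A^2\B_\alpha(u - \Pi^- u)\|\,dt$, because the nonlocality of $\B_\alpha$ couples all subintervals and the singularity structure of $\omega_{1+\alpha}$ differs in the two parameter ranges. When $-1 < \alpha < 0$ the convolution must be split near each $t_j$ as in the proof of Lemma~\ref{lem: bilinear}, and the switch between the two representations of the interpolation error---the $u'$-form on $I_1$ and the $u''$-form for $j \ge 2$---is forced by the assumed blow-up of $Au''$ and $A^2 u''$ at the origin, exactly as reflected in the definitions~\eqref{eq: D1 E1}--\eqref{eq: Dn En} of $D_j$ and $E_j$. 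Once this bound is in hand, collecting contributions and taking the supremum over $\|z_T\|\le 1$ produces~\eqref{eq: nodal error}.
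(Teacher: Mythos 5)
Your overall strategy coincides with the paper's: represent the nodal error via the duality identity of Theorem~\ref{thm: nodal error} (with the dual problem posed on $(0,t_n)$), kill the nodal terms in~\eqref{eq: GN dual} using $(\Pi^-u)(t_j^-)=u(t_j)$, and split the remaining integrals into the two pieces $\delta^j_1=\int_{I_j}\iprod{\eta,(z-Z)'}\,dt$ and $\delta^j_2$ with $\eta=u-\Pi^-u$. Your handling of the $\delta^j_2$ part is essentially the paper's argument (which for $-1<\alpha<0$ cites a lemma from earlier work to obtain the factor $E_1+\sum_jk_j^{2+\alpha_-}E_j$, and for $0<\alpha<1$ reverses the order of integration exactly as you describe, then applies Theorem~\ref{thm: weak ||Z-z||}).

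The gap is in $\delta^j_1$. After the orthogonality $\int_{I_j}\eta\,dt=0$ removes the piecewise-constant derivatives of $Z$ and of $\Pi^+z$, you are left with $\int_{I_j}\iprod{A\eta,A^{-1}z'}\,dt$, and you propose to finish by Cauchy--Schwarz together with the first-derivative bound $\|A^{-1}z'(t)\|\le C(t_n-t)^\alpha\|z_T\|$. But summing over $j$ then gives at best $\|A\eta\|_J\int_0^{t_n}(t_n-t)^\alpha\,dt\sim t_n^{1+\alpha}\|A\eta\|_J$, which contributes no power of $k$ beyond what is already contained in $D_1+\max_jk_jD_j$; the resulting nodal estimate would be only $O(k^{2+\alpha_-})$ after Lemma~\ref{lem: nodal conv}, not the claimed $O(k^{3+2\alpha_-})$. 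The paper extracts the missing factor $t_n^{\alpha_+}k^{1+\alpha_-}$ by exploiting the mean-zero property of $\eta$ a \emph{second} time: on each $I_j$ with $j\le n-1$ it replaces $z'(t)$ by $z'(t)-z'(t_{j-1})=\int_{t_{j-1}}^tz''(s)\,ds$ and invokes the second-derivative bound $\|A^{-1}z''(t)\|\le C(t_n-t)^{\alpha-1}\|z_T\|$ of Lemma~\ref{lem: dual reg}, so that $\sum_{j\le n-1}k_j\int_{I_j}(t_n-t)^{\alpha-1}\,dt\le Ct_n^{\alpha_+}k^{1+\alpha_-}$, while the final interval $I_n$ is handled separately with $z'$ since $\int_{I_n}(t_n-t)^\alpha\,dt\le Ck_n^{1+\alpha}$ there. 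Without this extra cancellation (or an equivalent device, such as the $z''$-form of the derivative of the $\Pi^+$ interpolation error), your argument cannot reach the stated rate.
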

\begin{proof}
Put $\eta=u-\Pi^-u$ and define
\[
\delta^n_1=\int_{I_n}\iprod{\eta,(z-Z)'}\,dt
\quad\text{and}\quad
\delta^n_2=\int_{I_n}\iprod{\B_\alpha A\eta,z-Z}\,dt.
\]
Since $\eta^n_-=0$ for~$1\le n\le N$, we see from
Theorem~\ref{thm: nodal error}, Lemma~\ref{lem: B repn} and
\eqref{eq: GN dual} that
\[
\iprod{U^N_--u(t_n),z_T}=G_N(\eta,Z-z)
    =\sum_{n=1}^N\bigl(\delta^n_1+\delta^n_2\bigr).
\]
Since $Z'$ is constant on~$I_n$, the second property of~$\Pi^-$
in~\eqref{eq: Pi properties} gives
\[
\delta^n_1=\int_{I_n}\iprod{\eta,z'}\,dt
    =\int_{I_n}\bigiprod{\eta(t),z'(t)-z'(t_{n-1})}\,dt
    =\int_{I_n}\int_{t_{n-1}}^t\bigiprod{A\eta(t),A^{-1}z''(s)}\,ds\,dt,
\]
and therefore, using Lemma~\ref{lem: dual reg},
\[
\sum_{n=1}^{N-1}|\delta^n_1|\le\|A\eta\|_{J}\sum_{n=1}^{N-1}
    k_n\int_{I_n}\|A^{-1}z''(t)\|\,dt
    \le C\|A\eta\|_{J_N}\|z_T\|\sum_{n=1}^{N-1}
        k_n\int_{I_n}(t_N-t)^{\alpha-1}\,dt,
\]
whereas $|\delta^N_1|=\bigl|\int_{I_N}\iprod{A\eta,A^{-1}z'}\,dt\bigr|
\le C\|A\eta\|_{I_N}\|z_T\|\int_{I_N}(t_N-t)^\alpha\,dt$.
Here,
\begin{multline*}
\sum_{n=1}^{N-1} k_n\int_{I_n}(t_N-t)^{\alpha-1}\,dt
    \le k\int_0^{t_{N-1}}(t_N-t)^{\alpha-1}\,dt
    =\frac{k}{\alpha}(t_N^\alpha-k_N^\alpha)
    \le C t_N^{\alpha_+}k^{1+\alpha_-},
\end{multline*}
and likewise
$\int_{I_N}(t_N-t)^\alpha\,dt=k_N^{1+\alpha}/(1+\alpha)
\le Ct_N^{\alpha_+}k^{1+\alpha_-}$.  By~\eqref{eq: ||Pi v-v||},
$\|A\eta\|_{I_1}\le3D_1$ and
$\|A\eta\|_{I_n}\le2k_nD_n$ for $2\le n\le N$, so
\begin{equation}\label{eq: delta 1 bound}
\sum_{n=1}^N|\delta^n_1|\le Ct_N^{2\alpha_+}k^{1+\alpha_-}
    \|z_T\|\biggl(D_1+\max_{2\le n\le N}k_nD_n\biggr)
    \quad\text{for $-1<\alpha<1$.}
\end{equation}

Turning to~$\delta^n_2$, if $-1<\alpha<0$,
then \cite[Lemma~2]{MustaphaMcLean2011}
\[
\biggl|\sum_{n=1}^N\delta^n_2\biggr|=\biggl|\int_0^{t_N}
    \iprod{\B_\alpha A^2\eta,A^{-1}(Z-z)}\,dt\biggr|
    \le C\|A^{-1}(Z-z)\|_J\biggl(E_1+\sum_{n=2}^N k_n^{2+\alpha}E_n\biggr),
\]
but if $0<\alpha<1$ then $|\delta^n_2|$ is bounded by
\[
\|A^{-1}(Z-z)\|_{I_n}\int_{I_n}\|\B_\alpha A^2\eta(t)\|\,dt
    \le\|A^{-1}(Z-z)\|_{I_n}\int_{I_n}\int_0^t
        \omega_\alpha(t-s)\|A^2\eta(s)\|\,ds,
\]
so, after summing over~$n$ and reversing the order of integration,
\[
\sum_{n=1}^N|\delta^n_2|
    \le Ct_N^\alpha\|A^{-1}(Z-z)\|_J\int_0^{t_N}\|A^2\eta(t)\|\,dt.
\]
The integral representation~\eqref{eq: error Pi minus} implies that
\begin{align*}
\int_{I_1}\|A^2\eta(t)\|&\le\int_{I_1}\biggl(\int_t^{t_1}\|A^2u'(s)\|\,ds
    +2\frac{t_1-t}{k_1^2}\int_{I_1}s\|A^2u'(s)\|\,ds\biggr)\,dt\\
    &=\int_{I_1}\|A^2u'(s)\|\biggl(\int_0^s\,dt+\frac{s}{k_1^2}
        \int_{I_1}2(t_1-t)\,dt\biggr)\,ds=2E_1,
\end{align*}
and by~\eqref{eq: ||Pi v-v||},
$\int_{t_1}^{t_N}\|A^2\eta(t)\|\,dt\le\sum_{n=2}^Nk_n\|A^2\eta\|_{I_n}
\le\sum_{n=2}^N2k_n^2 E_j$.
Applying Theorem~\ref{thm: weak ||Z-z||},
\begin{equation}\label{eq: delta 2 bound}
\sum_{n=1}^N|\delta^n_2|\le Ct_N^{2\alpha_+}k^{1+\alpha_-}
    \ell(t_N/k_N)\|z_T\|
    \biggl(E_1+\sum_{n=2}^Nk_n^{2+\alpha_-}E_n\biggr)
    \quad\text{for $-1<\alpha<1$.}
\end{equation}
Since $z_T\in\Hilb$ is arbitrary, the desired estimate follows from
\eqref{eq: delta 1 bound}~and \eqref{eq: delta 2 bound}.
\end{proof}

To estimate the convergence rate at the nodes, we introduce some assumptions
about the behaviour of the time steps, namely that, for some fixed~$\gamma\ge1$,
\begin{equation}\label{eq: kn}
k_n\le C_\gamma k\min(1, t_n^{1-1/\gamma})
\quad\text{and}\quad
t_n\le C_\gamma t_{n-1}\quad\text{for $2\le n\le N$,}
\end{equation}
with
\begin{equation}\label{eq: k1}
c_\gamma k^\gamma\le k_1\le C_\gamma k^{\gamma}.
\end{equation}
For example, these assumptions are satisfied if we put
\begin{equation}\label{eq: std tn}
t_n=(n/N)^\gamma T\quad\text{for $0\le n\le N$.}
\end{equation}

\begin{lemma}\label{lem: nodal conv}
Assume that $u$ satisfies \eqref{eq: Au'}~and \eqref{eq: A^2u'}, and that
the time mesh satisfies \eqref{eq: kn}~and \eqref{eq: k1}.  Then, with
$\gamma^*=(2+\alpha_-)/\sigma$ and for $1\le n\le N$,
\[
\epsilon(u)\le C_TM\times\begin{cases}
k^{\gamma\sigma},&1\le\gamma<\gamma^*,\\
k^{2+\alpha_-}\logp(t_n/k_1),&\gamma=\gamma^*,\\
k^{2+\alpha_-},&\gamma>\gamma^*.
\end{cases}
\]
\end{lemma}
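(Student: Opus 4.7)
The plan is to bound each of the four pieces of $\epsilon(u)$ separately using the regularity bounds~\eqref{eq: Au'}--\eqref{eq: A^2u'} on the integrands together with the mesh inequalities~\eqref{eq: kn}--\eqref{eq: k1}. For the initial interval the $t^{1+\alpha_-}$ weight in $E_1$ is arranged precisely to absorb the singular factor in $\|A^2u'(t)\|\le Mt^{\sigma-\alpha_--2}$, reducing $E_1$ to $M\int_{I_1}t^{\sigma-1}\,dt\le CMk_1^\sigma$; the same estimate applies to $D_1$ via $\|Au'(t)\|\le Mt^{\sigma-1}$, so $D_1+E_1\le CMk_1^\sigma\le CMk^{\gamma\sigma}$ by~\eqref{eq: k1}.

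For $j\ge2$, the coarsening condition $t_j\le C_\gamma t_{j-1}$ allows me to replace $t$ by $t_j$ up to a constant throughout $I_j$, giving $D_j\le CMk_jt_j^{\sigma-2}$ and $E_j\le CMk_jt_j^{\sigma-\alpha_--3}$. Substituting $k_j\le C_\gamma kt_j^{1-1/\gamma}$ then yields
\[
k_jD_j\le CMk^2t_j^{\sigma-2/\gamma},\qquad
k_j^{2+\alpha_-}E_j\le CMk_jk^{2+\alpha_-}t_j^{\beta},\qquad
\beta:=\sigma-1-\frac{2+\alpha_-}{\gamma}.
\]
The maximum $\max_{2\le j\le n}k_jD_j$ is handled by a case split on the sign of $\sigma-2/\gamma$: a negative exponent makes $t_j\sim k^\gamma$ dominant and yields $CMk^{\gamma\sigma}$, while a non-negative exponent makes $t_j\le T$ dominant and yields $C_TMk^2$. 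Since $\alpha_-\le0$ forces $\gamma^*\le2/\sigma$, in either subcase the bound fits within the claimed right-hand side.

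The sum $\sum_{j=2}^n k_j^{2+\alpha_-}E_j$ is the crux. Interpreting the leading $k_j$ factor as a Riemann-sum increment and using $t\sim t_j$ on $I_j$ gives
\[
\sum_{j=2}^n k_j^{2+\alpha_-}E_j\le CMk^{2+\alpha_-}\sum_{j=2}^n k_jt_j^{\beta}\le CMk^{2+\alpha_-}\int_{t_1}^{t_n}t^{\beta}\,dt.
\]
From $\beta+1=\sigma-(2+\alpha_-)/\gamma$ one reads off that $\beta<-1$, $\beta=-1$, and $\beta>-1$ correspond respectively to $\gamma<\gamma^*$, $\gamma=\gamma^*$, and $\gamma>\gamma^*$. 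In the first case the integral is controlled by $t_1^{1+\beta}\sim k^{\gamma\sigma-(2+\alpha_-)}$, giving $O(Mk^{\gamma\sigma})$; in the second it equals $\log(t_n/t_1)$, which by~\eqref{eq: k1} is comparable to $\logp(t_n/k_1)$, giving $CMk^{2+\alpha_-}\logp(t_n/k_1)$; and in the third it is bounded by a constant depending only on $T$, giving $C_TMk^{2+\alpha_-}$.

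Combining the four bounds and using $k^{\gamma\sigma}\le k^{2+\alpha_-}$ whenever $\gamma\ge\gamma^*$ yields the claimed three-case estimate. The main technical obstacle is the Riemann-sum-to-integral passage in the borderline case $\beta=-1$, where I must take care that the logarithmic factor is exactly $\logp(t_n/k_1)$ and that constants remain independent of $N$; every other step is routine bookkeeping.
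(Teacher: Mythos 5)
Your proposal is correct and follows essentially the same route as the paper's proof: the weight $t^{1+\alpha_-}$ in $E_1$ cancels the singularity to give $D_1+E_1\le CMk_1^\sigma$, the term $k_jD_j$ is bounded by $CMk_j^2t_j^{\sigma-2}$ with a case split on the sign of $\sigma-2/\gamma$, and the sum $\sum_jk_j^{2+\alpha_-}E_j$ is compared to $k^{2+\alpha_-}\int_{k_1}^{t_n}t^{\sigma-1-(2+\alpha_-)/\gamma}\,dt$ with the three cases determined by the sign of $\sigma-(2+\alpha_-)/\gamma$. No gaps.
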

\emph{Proof}.
The stated assumptions imply that
$
D_1+E_1\le CM\int_0^{k_1}t^{\sigma-1}\,dt\le CMk_1^\sigma
    \le CMk^{\gamma\sigma},
$
and, for $2\le j\le n$,
\begin{multline*}
k_jD_j\le CMk_j\int_{I_j}t^{\sigma-2}\,dt\le CMk_j^2t_j^{\sigma-2}
    \le CM\times\begin{cases}
    k^{\gamma\sigma},&1\le\gamma<2/\sigma,\\
    t_n^{\sigma-2/\gamma}k^2,&\gamma\ge2/\sigma.
    \end{cases}\\
    \le CM\times\begin{cases}
    k^{\gamma\sigma},&1\le\gamma\le\gamma^*,\\
    k^{2+\alpha_-},&\gamma^*\le\gamma\le2/\sigma,\\
    t_n^{\sigma-(2+\alpha_-)/\gamma}k^{2+\alpha_-},&\gamma\ge\gamma^*.
    \end{cases}
\end{multline*}
Similarly,
\begin{align*}
\sum_{j=2}^n k_j^{2+\alpha_-}E_j
    &\le M\sum_{j=2}^n k_j^{2+\alpha_-}\int_{I_j}t^{\sigma-3-\alpha_-}\,dt
    \le CMk^{2+\alpha_-}\int_{k_1}^{t_n}
        t^{\sigma-1-(2+\alpha_-)/\gamma}\,dt\\
    &\le CMk^{2+\alpha_-}\times\begin{cases}
    k^{\gamma\sigma-(2+\alpha_-)},&1<\gamma<\gamma^*,\\
    \log(t_n/k_1),                   &\gamma=\gamma^*,\\
    t_n^{\sigma-(2+\alpha_-)/\gamma},&\gamma>\gamma^*.\qquad\endproof
    \end{cases}
\end{align*}

We can now state our main result on nodal superconvergence.

\begin{theorem}\label{thm: nodal conv}
Assume that the solution~$u$ of the initial value problem~\eqref{eq: ivp} 
satisfies \eqref{eq: Au'}~and \eqref{eq: A^2u'}, and that
the time mesh satisfies \eqref{eq: kn}~and \eqref{eq: k1} 
with~$\gamma>\gamma^*=(2+\alpha_-)/\sigma$.  Then, 
for the DG method~\eqref{eq: DG step}, we have the error bound
\[
\|U^n_--u(t_n)\|\le CMt_n^{2\alpha_+}k^{3+2\alpha_-}\logp(t_n/k_n)
	\quad\text{for $1\le n\le N$.}
\]
\end{theorem}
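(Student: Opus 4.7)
The plan is to combine the two preceding preparatory results directly: Theorem~\ref{thm: ||nodal error||} already expresses the nodal error in the abstract form
\[
\|U^n_--u(t_n)\|\le Ct_n^{2\alpha_+}k^{1+\alpha_-}\logp(t_n/k_n)\,\epsilon(u),
\]
and Lemma~\ref{lem: nodal conv} converts the data-dependent quantity~$\epsilon(u)$, as defined in \eqref{eq: E}--\eqref{eq: Dn En}, into an explicit power of~$k$ once the regularity hypotheses \eqref{eq: Au'}--\eqref{eq: A^2u'} and the mesh conditions \eqref{eq: kn}--\eqref{eq: k1} are in force. All the genuinely delicate work---namely the dual-problem analysis culminating in Theorem~\ref{thm: weak ||Z-z||}, the Galerkin-orthogonality representation in Theorem~\ref{thm: nodal error}, and the case-by-case splitting of the graded-mesh sums in Lemma~\ref{lem: nodal conv}---has already been carried out, so the proof of Theorem~\ref{thm: nodal conv} reduces essentially to multiplication of two inequalities.

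Concretely, I would first invoke Theorem~\ref{thm: ||nodal error||} to obtain the bound above. Then, since the hypothesis $\gamma>\gamma^*$ is exactly the third branch of Lemma~\ref{lem: nodal conv}, I would read off
\[
\epsilon(u)\le C_T M\,k^{2+\alpha_-},
\]
with no additional logarithmic factor in this regime. Substituting this into the abstract nodal-error estimate and using $k^{1+\alpha_-}\cdot k^{2+\alpha_-}=k^{3+2\alpha_-}$, together with absorbing the $T$-dependent constant from~$C_T$ into the generic~$C$, immediately yields
\[
\|U^n_--u(t_n)\|\le CMt_n^{2\alpha_+}k^{3+2\alpha_-}\logp(t_n/k_n),
\]
as required, for each $1\le n\le N$.

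There is no real obstacle to overcome here; the only things to double-check are bookkeeping items: that the conditions \eqref{eq: Au'}--\eqref{eq: A^2u'} required by Lemma~\ref{lem: nodal conv} are precisely the regularity assumptions stated in Theorem~\ref{thm: nodal conv}, that $\gamma>\gamma^*$ places us in the third (log-free) branch of the lemma rather than the critical case $\gamma=\gamma^*$ (where an extra $\logp(t_n/k_1)$ would appear and slightly worsen the bound), and that the $\logp(t_n/k_n)$ factor inherited from Theorem~\ref{thm: ||nodal error||} is retained in the final statement. Once these small compatibility checks are made, the theorem follows in a single line.
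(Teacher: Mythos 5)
Your proposal is correct and matches the paper's own proof, which likewise just combines Theorem~\ref{thm: ||nodal error||} with the third branch of Lemma~\ref{lem: nodal conv} and multiplies the two bounds. Nothing further is needed.
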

\begin{proof}
The error bound follows at once from Theorem
\ref{thm: ||nodal error||}~and Lemma~\ref{lem: nodal conv}.
\end{proof}
%
\section{Postprocessing}\label{sec: PP}
%
We can postprocess the DG solution~$U$ to obtain a globally
superconvergent solution~$\Upp$ using simple Lagrange interpolation,
as follows.  Given a piecewise continuous function~$v:J\to\Hilb$, define
$\Lagint v:J\to\Hilb$ by linear interpolation on the first two subintervals,
\begin{equation}\label{eq: L I1}
\Lagint v(t)=k_n^{-1}[(t_n-t)v^{n-1}_-+(t-t_{n-1})v^n_-]
	\quad\text{for $t\in I_n$ and $n\in\{1,2\}$,}
\end{equation}
and backward quadratic interpolation on the remaining subintervals,
\begin{equation}\label{eq: L In}
\Lagint v(t)=\frac{(t-t_{n-1})(t-t_n)}{k_{n-1}(k_{n-1}+k_n)}\,v^{n-2}_-
    -\frac{(t-t_{n-2})(t-t_n)}{k_{n-1}k_n}\,v^{n-1}_-
    +\frac{(t-t_{n-2})(t-t_{n-1})}{(k_{n-1}+k_n)k_n}\,v^n_-
\end{equation}
for $t\in I_n$ and $n\ge3$.  Thus, $(\Lagint v)(t_n)=v^n_-$
for~$0\le n\le N$, and we define the postprocessed solution by
\begin{equation}\label{eq: U*}
\Upp=\Lagint U.
\end{equation}
The interpolant of the exact solution satisfies the following error bound.

\begin{lemma}\label{lem: L error}
If there exist positive constants $M$~and $\sigmapp$ such that
\begin{equation}\label{eq: u'''}
\|u'(t)\|+t^2\|u'''(t)\|\le Mt^{\sigmapp-1}
    \quad\text{for $0<t\le T$,}
\end{equation}
and if the time mesh satisfies \eqref{eq: kn}~and \eqref{eq: k1}
with~$\gamma\ge3/\sigmapp$, then
\[
\|u-\Lagint u\|_J\le CM\times\begin{cases}
k^{\gamma\sigmapp},&1\le\gamma<3/\sigmapp,\\
T^{\sigmapp-3/\gamma}k^3,&\gamma\ge3/\sigmapp.
\end{cases}
\]
\end{lemma}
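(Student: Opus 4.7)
The plan is to decompose $J$ into the two initial subintervals $I_1, I_2$, on which $\Lagint u$ is the piecewise-linear interpolant of $u$ at its endpoints, and the remaining subintervals $I_n$ ($n\ge 3$), on which $\Lagint u$ is the backward quadratic Lagrange interpolant through $u(t_{n-2}), u(t_{n-1}), u(t_n)$. I will bound $\|\Lagint u - u\|_{I_n}$ separately in each regime and take the maximum.

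On $I_1$ and $I_2$, the barycentric representation~\eqref{eq: L I1} combined with the triangle inequality yields the crude but adequate estimate $\|\Lagint u - u\|_{I_n}\le 2\int_{I_n}\|u'(s)\|\,ds$. Substituting $\|u'(s)\|\le Ms^{\sigmapp-1}$, using the bound $k_1\le Ck^{\gamma}$ from~\eqref{eq: k1}, and observing (by a short argument from~\eqref{eq: kn}--\eqref{eq: k1}) that $t_2 \le Ck^{\gamma}$ as well, this gives a contribution of order $Mk^{\gamma\sigmapp}$ on $I_1\cup I_2$.

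For $n\ge 3$ I will use the quadratic Lagrange interpolation error in the Hilbert-space setting, which is obtained from the scalar Peano-kernel identity by testing against unit vectors in $\Hilb$:
\[
\|\Lagint u(t) - u(t)\| \le \tfrac{1}{6}\bigl|(t - t_{n-2})(t - t_{n-1})(t - t_n)\bigr|\sup_{s\in[t_{n-2},t_n]}\|u'''(s)\|.
\]
Estimating the polynomial factor by $(k_{n-1}+k_n)k_n^2$, applying $\|u'''(s)\|\le Ms^{\sigmapp-3}$ (legitimate because $t_{n-2}>0$), and using the mesh bounds $k_j\le Ckt_j^{1-1/\gamma}$ together with the iterated inequality $t_{n-2}\ge ct_n$ (a consequence of $t_j\le Ct_{j-1}$), this reduces to the local bound $CMk^3 t_n^{\sigmapp-3/\gamma}$. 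Maximizing over $n\ge 3$ then separates the two regimes of the lemma: if $\gamma\ge 3/\sigmapp$ the exponent $\sigmapp-3/\gamma$ is non-negative, so $t_n^{\sigmapp-3/\gamma}\le T^{\sigmapp-3/\gamma}$ and we obtain the stated $T^{\sigmapp-3/\gamma}k^3$ bound; if $1\le\gamma<3/\sigmapp$ the exponent is negative, the maximum is realized near $n=3$ where $t_n\sim k^{\gamma}$, and the resulting $k^3\cdot k^{\gamma\sigmapp-3}=k^{\gamma\sigmapp}$ dovetails with the contributions from $I_1, I_2$. The main technical obstacle will be the passage from the scalar Peano-kernel identity to the Hilbert-space-valued setting (via duality) and the bookkeeping showing that $k_{n-1}$ and $k_n$ are comparable on meshes satisfying \eqref{eq: kn}--\eqref{eq: k1}, so that replacing $(k_{n-1}+k_n)k_n^2$ by $Ck^3 t_n^{3(1-1/\gamma)}$ is justified uniformly in $n$.
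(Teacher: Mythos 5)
Your proposal is correct and follows essentially the same route as the paper: the crude bound $2\int_{I_n}\|u'(s)\|\,ds$ on $I_1\cup I_2$ with $t_2\le Ck^\gamma$, and for $n\ge3$ the divided-difference/Peano bound $\tfrac{1}{24}k_n^2(k_{n-1}+k_n)\|u'''\|_{[t_{n-2},t_n]}$ combined with $t_{n-2}\ge ct_n$ and \eqref{eq: kn} to get $CMk^3t_n^{\sigmapp-3/\gamma}$ and the two regimes. Note only that comparability of $k_{n-1}$ and $k_n$ is not actually needed (and does not follow from \eqref{eq: kn}--\eqref{eq: k1} alone): the bound $k_{n-1}\le C_\gamma k\,t_{n-1}^{1-1/\gamma}\le C_\gamma k\,t_n^{1-1/\gamma}$ already gives $(k_{n-1}+k_n)k_n^2\le Ck^3t_n^{3(1-1/\gamma)}$ directly.
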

\emph{Proof}.
If $n\in\{1,2\}$ and $t\in I_n$, then
\[
(u-\Lagint u)(t)=\int_{t_{n-1}}^t u'(s)\,ds
	-\frac{t}{k_n}\int_{t_{n-1}}^{t_n} u'(s)\,ds
\]
and thus
$\|u-\Lagint u\|_{I_n}\le 2 \int_{t_{n-1}}^{t_n} \|u'(s)\|\,ds
    \le CM t_n^\sigmapp\le CM(k_1+k_2)^\sigmapp\le CMk^{\gamma\sigmapp}$.
If $n\ge3$ and $t\in I_n$, then we can write the interpolation error
in terms of a divided difference,
$(u-\Lagint u)(t)=u[t_{n-2},t_{n-1},t,t_n](t-t_{n-2})(t-t_{n-1})(t-t_n)$, so
\[
\|u-\Lagint u\|_{I_n}\le\tfrac14k_n^2(k_{n-1}+k_n)\tfrac{1}{3!}
    \|u'''\|_{[t_{n-2},t_n]}
    \le CMk_n^2(k_{n-1}+k_n)t_n^{\sigmapp-3},
\]
where, in the final step, we used~\eqref{eq: kn}.
If $1\le\gamma<3/\sigmapp$ then, again using \eqref{eq: kn},
\[
k_n^2(k_{n-1}+k_n)t_n^{\sigmapp-3}
    \le C(kt_n^{1-1/\gamma})^{\gamma\sigmapp}k_n^{3-\gamma\sigmapp}
    t_n^{\sigmapp-3}= Ck^{\gamma\sigmapp}(k_n/t_n)^{3-\gamma\sigmapp}
    \le Ck^{\gamma\sigmapp},
\]
but for~$\gamma\ge3/\sigma$,
\[
k_n^2(k_{n-1}+k_n)t_n^{\sigmapp-3}
\le C(kt_n^{1-1/\gamma})^3t_n^{\sigmapp-3}
\le Ck^3t_n^{\sigmapp-3/\gamma}\le CT^{\sigmapp-3/\gamma}k^3.
\eqno\endproof
\]

Now consider the stability of the interpolation operator~$\Lagint$.
We see from~\eqref{eq: L I1} that
\[
\|\Lagint v\|_{I_1}\le\max\bigl(|v^0_-|,|v^1_-|\bigr)
\quad\text{and}\quad
\|\Lagint v\|_{I_2}\le\max\bigl(|v^1_-|,|v^2_-|\bigr).
\]
A similar estimate holds for the subsequent subintervals
provided the mesh satisfies the local quasi-uniformity condition
\begin{equation}\label{eq: Lambda}
k_n\le\Lambda k_{n-1}\quad\text{for $3\le n\le N$.}
\end{equation}
For example, our standard mesh~\eqref{eq: std tn}
satisfies this condition with~$\Lambda=2^\gamma-1$.

\begin{lemma}\label{lem: L stab}
If \eqref{eq: Lambda} holds, then
\[
\|\Lagint v\|_{I_n}\le\bigl(2+\tfrac54\Lambda\bigr)\max_{n-2\le j\le n}|v^j_-|
	\quad\text{for $3\le n\le N$.}
\]
\end{lemma}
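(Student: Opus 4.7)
The plan is to apply the triangle inequality to the explicit Lagrange formula~\eqref{eq: L In} and to bound each of the three Lagrange basis polynomials individually in the supremum norm on~$I_n$. Parameterizing $I_n$ by $t=t_{n-1}+\tau$ with $\tau\in[0,k_n]$ yields the elementary identities $t-t_{n-2}=k_{n-1}+\tau$, $t-t_{n-1}=\tau$ and $t_n-t=k_n-\tau$, which reduce each basis polynomial to a polynomial in~$\tau$ on~$[0,k_n]$.

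For the coefficient of~$v^{n-2}_-$, the relevant numerator is $\tau(k_n-\tau)$, which attains its maximum~$k_n^2/4$ at~$\tau=k_n/2$; using the trivial bound $k_{n-1}+k_n\ge k_n$ together with~\eqref{eq: Lambda}, this basis polynomial is at most $k_n/(4k_{n-1})\le\Lambda/4$ on~$I_n$. For the coefficient of~$v^{n-1}_-$, I would use the direct bounds $|t-t_{n-2}|\le k_{n-1}+k_n$ and $|t-t_n|\le k_n$ to obtain a supremum bound $(k_{n-1}+k_n)k_n/(k_{n-1}k_n)=1+k_n/k_{n-1}\le 1+\Lambda$. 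Finally, for the coefficient of~$v^n_-$, the bounds $|t-t_{n-2}|\le k_{n-1}+k_n$ and $|t-t_{n-1}|\le k_n$ together show that this basis polynomial is bounded by~$1$ on~$I_n$.

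Summing the three supremum norms and factoring out $\max_{n-2\le j\le n}|v^j_-|$ yields the stated constant $\Lambda/4+(1+\Lambda)+1=2+5\Lambda/4$. The argument is entirely elementary and there is no real obstacle; the only point worth flagging is that the hypothesis~\eqref{eq: Lambda} is \emph{one-sided}---it controls $k_n/k_{n-1}$ but places no restriction on $k_{n-1}/k_n$---so care must be taken to apply it only at the places where the ratio $k_n/k_{n-1}$ appears. The crude factorizations chosen above were arranged to have precisely this property, so no additional assumption on $k_{n-1}/k_n$ is required.
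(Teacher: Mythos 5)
Your proof is correct and is essentially identical to the paper's: both bound the three Lagrange basis polynomials on $I_n$ term by term, using $|(t-t_{n-1})(t_n-t)|\le\tfrac14k_n^2$ for the first and the crude length bounds for the other two, and invoke \eqref{eq: Lambda} only where the ratio $k_n/k_{n-1}$ appears. Nothing further is needed.
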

\begin{proof}
The estimate follows from~\eqref{eq: L In} because, for $t\in I_n$ and $n\ge2$,
\begin{gather*}
\frac{|(t-t_{n-1})(t-t_n)|}{k_{n-1}(k_{n-1}+k_n)}
    \le\frac{\tfrac14k_n^2}{k_{n-1}(k_{n-1}+k_n)}
    \le\frac{\tfrac14k_n}{k_{n-1}}\le\tfrac14\Lambda,\\
\frac{|(t-t_{n-2})(t-t_n)|}{k_{n-1}k_n}
    \le\frac{(k_{n-1}+k_n)k_n}{k_{n-1}k_n}
    =1+\frac{k_n}{k_{n-1}}\le 1+\Lambda,\\
\frac{|(t-t_{n-2})(t-t_{n-1})|}{(k_{n-1}+k_n)k_n}
    \le\frac{(k_{n-1}+k_n)k_n}{(k_{n-1}+k_n)k_n}=1.
\end{gather*}
\end{proof}

Hence, the interpolant~$\Upp$ is superconvergent, uniformly in~$t$.

\begin{theorem}\label{thm: PP U-u}
Suppose that the time mesh satisfies \eqref{eq: kn}, \eqref{eq: k1}~and
\eqref{eq: Lambda}, and that $u$ satisfies \eqref{eq: u'''}.
If $\gamma\ge3/\sigmapp$, then the postprocessed solution~\eqref{eq: U*}
satisfies
\[
\|\Upp-u\|_J\le\max_{0\le n\le N}\|U^n_--u(t_n)\|
	+C_T\Lambda M\times\begin{cases}
		k^{\gamma\sigmapp},&1\le\gamma<3/\sigmapp,\\
		k^3,&\gamma\ge3/\sigmapp.
\end{cases}
\]
\end{theorem}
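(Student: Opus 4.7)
The plan is to split the postprocessed error into an interpolation of the nodal error plus an interpolation error for the exact solution, and then invoke Lemmas~\ref{lem: L error} and~\ref{lem: L stab} together with the linearity of~$\Lagint$.

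Concretely, since $\Lagint$ is linear, on each subinterval~$I_n$ I would write
\[
\Upp - u = \Lagint U - u = \Lagint(U-u) + (\Lagint u - u).
\]
The second summand is governed directly by Lemma~\ref{lem: L error}: under the assumption $\gamma\ge 3/\sigmapp$ that lemma yields $\|\Lagint u - u\|_J \le C_T M k^3$, and the piecewise bound $C M k^{\gamma\sigmapp}$ is retained for smaller~$\gamma$ (the case $1\le\gamma<3/\sigmapp$ would be relevant if one dropped the hypothesis $\gamma\ge 3/\sigmapp$, which is why the theorem states both alternatives).

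For the first summand, the key observation is that $(U-u)(t_n^-) = U^n_- - u(t_n)$, because $u$ is continuous at~$t_n$. Hence, applying the definitions \eqref{eq: L I1}--\eqref{eq: L In} directly on $I_1$ and $I_2$ gives
\[
\|\Lagint(U-u)\|_{I_n} \le \max\bigl(\|U^{n-1}_- - u(t_{n-1})\|,\,\|U^n_- - u(t_n)\|\bigr) \qquad (n=1,2),
\]
whereas on $I_n$ for $n\ge 3$ Lemma~\ref{lem: L stab} applied to $v=U-u$ gives
\[
\|\Lagint(U-u)\|_{I_n} \le \bigl(2+\tfrac{5}{4}\Lambda\bigr)\max_{n-2\le j\le n}\|U^j_- - u(t_j)\|.
\]
Taking the supremum over $n$ and absorbing the factor $(2+\tfrac{5}{4}\Lambda)$ into the statement bounds $\|\Lagint(U-u)\|_J$ by the nodal-error maximum (up to a constant depending on~$\Lambda$). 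Here the assumption \eqref{eq: Lambda} is essential so that Lemma~\ref{lem: L stab} is applicable.

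The triangle inequality then delivers the claimed estimate. I do not expect any significant obstacle: both ingredients (interpolation-error bound for $u$ and stability of~$\Lagint$ in the maximum norm on nodal data) have already been established, so the proof is essentially a two-line combination. The only mildly delicate point is bookkeeping the $\Lambda$-dependence in the leading coefficient of the nodal-error maximum; this can either be absorbed into the constant in front of the second term as in the stated bound, or tracked separately if one wants a sharper constant.
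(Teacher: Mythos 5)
Your proposal is correct and is essentially identical to the paper's proof, which uses the same decomposition $\Upp-u=(\Lagint u-u)+\Lagint(U-u)$ and then applies Lemma~\ref{lem: L error} to the first term and Lemma~\ref{lem: L stab} (together with the elementary bounds on $I_1$, $I_2$) to the second. Your remark about where the factor $2+\tfrac54\Lambda$ should sit is a fair observation about the statement's bookkeeping, but it does not affect the validity of the argument.
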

\begin{proof}
Write $\Upp-u=(\Lagint u-u)+\Lagint(U-u)$
and apply Lemmas \ref{lem: L error}~and \ref{lem: L stab}.
\end{proof}
%
\section{Spatial discretization}\label{sec: space}
%
\subsection{The fully discrete DG method}\label{subsec: FD DG}
We denote the norm of~$u$ in~$H^r(\Omega)$ by~$\|u\|_r$, and
assume now that $A=-\nabla^2$ in a bounded, convex or $C^2$ 
domain~$\Omega$ in~$\R^d$, subject to homogeneous Dirichlet boundary 
conditions.  Thus, if $u\in H^1_0(\Omega)$ and $Au\in L_2(\Omega)$, 
then $u\in H^2(\Omega)$~and $\|u\|_2\le C\|Au\|_{L_2(\Omega)}$. Let
$S_h\subseteq D(A^{1/2})=H^1_0(\Omega)$ denote the space of 
continuous, piecewise-linear functions with respect to a
quasi-uniform partition of~$\Omega$ into triangular or quadrilateral
(or tetrahedral etc.)
finite elements, with maximum diameter~$h$.  Recall that the
$L_2$-projector $P_h:L_2(\Omega)\to S_h$ and the Ritz
projector~$R_h:H^1_0(\Omega)\to S_h$ are defined by
\begin{equation}\label{eq: Rh def}
\iprod{P_hv,W}=\iprod{v,W}
\quad\text{and}\quad
A(R_hv,W)=A(v,W)\quad\text{for all $W\in S_h$,}
\end{equation}
and that the latter has the quasi-optimal approximation property
\begin{equation}\label{eq: Rh error}
\|v-R_hv\|+h\|\nabla(v-R_hv)\|\le Ch^2\|v\|_2
    \quad\text{for $v\in H^1_0(\Omega)\cap H^2(\Omega)$.}
\end{equation}
Let $\Trial(S_h)$ denote
the space of piecewise linear functions~$U:J\to S_h$ (so $U$ is continuous
in space, but may be discontinuous in time).

We define the fully discrete DG solution~$U_h\in\Trial(S_h)$ by requiring
\eqref{eq: DG step} to hold for every~$X\in\Trial(S_h)$.  Equivalently,
cf.~\eqref{eq: GN U},
\begin{equation}\label{eq: FD}
G_N(U_h,X)=\iprod{U_{h-}^0,X^0_+}+\int_0^{t_N}\iprod{f(t),X(t)}\,dt
    \quad\text{for all $X\in\Trial(S_h)$,}
\end{equation}
where, for simplicity, we choose $U_{h-}^0=(U_h)^0_-=P_hu_0$.  In view
of~\eqref{eq: GN(u,X)}, the Galerkin orthogonality
property~\eqref{eq: Gal orthog} now takes the form
\begin{equation}\label{eq: Gal orthog FD}
G_N(U_h-u,X)=\iprod{P_hu_0-u_0,X^0_+}=0\quad\text{for all $X\in\Trial(S_h)$.}
\end{equation}
Similarly, the fully discrete DG solution~$Z_h\in\Trial(S_h)$ for the
dual problem~\eqref{eq: dual} is defined by
\begin{equation}\label{eq: dual FD}
G_N(V,Z_h)=\iprod{V^N_-,Z_{h+}^N}\quad\text{for all $V\in\Trial(S_h)$,}
\quad\text{with $Z_{h+}^N=P_hz_T$,}
\end{equation}
and, since $z$ satisfies~\eqref{eq: dual weak},
\begin{equation}\label{eq: dual Gal orthog FD}
G_N(V,Z_h-z)=\iprod{V^N_-,P_hz_T-z_T}=0\quad\text{for all $V\in\Trial(S_h)$.}
\end{equation}
Theorem~\ref{thm: nodal error} generalizes as follows.

\begin{theorem}\label{thm: nodal error FD}
If $u$~and $z$ are the solutions of the initial value problem~\eqref{eq: ivp}
and the dual problem~\eqref{eq: dual}, and if $U_h$~and $Z_h$ are the
corresponding fully discrete DG solutions satisfying
\eqref{eq: FD}~and \eqref{eq: dual FD}, then
\[
\bigiprod{U_{h-}^N-u(t_N),z_T}=G_N(u-\Pi^-R_hu,Z_h-z)
    \quad\text{for every $z_T\in\Hilb$.}
\]
\end{theorem}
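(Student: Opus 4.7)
The plan is to parallel the proof of Theorem~\ref{thm: nodal error} while substituting $U_h$ and $Z_h$ for $U$ and $Z$, and replacing $\Pi^-u$ by the Ritz-projected interpolant $\Pi^-R_hu$ so that the test function lands inside the fully discrete trial space $\Trial(S_h)$. First I would take $V=U_h\in\Trial(S_h)$ in~\eqref{eq: dual FD}; since $U_{h-}^N\in S_h$ and $P_h$ is self-adjoint, $\iprod{U_{h-}^N,P_hz_T}=\iprod{U_{h-}^N,z_T}$, so $G_N(U_h,Z_h)=\iprod{U_{h-}^N,z_T}$. Setting $v=u$ in~\eqref{eq: dual weak}, and using continuity of~$u$ at~$t_N$ so that $u^N_-=u(t_N)$, gives $G_N(u,z)=\iprod{u(t_N),z_T}$. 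Subtracting and using bilinearity of~$G_N$ then yields
\[
\bigiprod{U_{h-}^N-u(t_N),z_T}=G_N(U_h,Z_h)-G_N(u,z)
    =G_N(U_h-u,Z_h)+G_N(u,Z_h-z).
\]

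Next I would apply the two Galerkin orthogonality identities.  The first summand vanishes by~\eqref{eq: Gal orthog FD} with $X=Z_h\in\Trial(S_h)$, leaving $\iprod{U_{h-}^N-u(t_N),z_T}=G_N(u,Z_h-z)$.  The key remaining observation is that $\Pi^-R_hu\in\Trial(S_h)$: for each~$t\in I_n$, the explicit formula following~\eqref{eq: Pi properties} writes $\Pi^-R_hu(t)$ as a linear combination of $R_hu(t_n^-)$ and the mean $k_n^{-1}\int_{I_n}R_hu(s)\,ds$, both of which belong to the time-independent space~$S_h$.  Consequently \eqref{eq: dual Gal orthog FD} with $V=\Pi^-R_hu$ gives $G_N(\Pi^-R_hu,Z_h-z)=0$, and subtracting this zero from $G_N(u,Z_h-z)$ produces the stated identity.

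The argument is essentially bookkeeping once one recognises why the Ritz projector must be inserted: in the semidiscrete proof the natural test function $\Pi^-u$ need not lie in~$S_h$ pointwise in~$t$, so it is inadmissible in~\eqref{eq: dual Gal orthog FD}; replacing it by $\Pi^-R_hu$ fixes this and introduces the extra spatial-discretisation error that will be controlled by~\eqref{eq: Rh error} in the subsequent convergence estimate.  I do not anticipate any substantive obstacle at this stage, since the statement is a straightforward generalisation of Theorem~\ref{thm: nodal error} that requires only the two Galerkin orthogonalities \eqref{eq: Gal orthog FD}--\eqref{eq: dual Gal orthog FD} together with the trivial verification that $\Pi^-R_hu\in\Trial(S_h)$.
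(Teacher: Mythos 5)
Your proposal is correct and follows essentially the same route as the paper: take $V=U_h$ in~\eqref{eq: dual FD} and $v=u$ in~\eqref{eq: dual weak}, kill $G_N(U_h-u,Z_h)$ via~\eqref{eq: Gal orthog FD}, and then subtract $G_N(\Pi^-R_hu,Z_h-z)=0$ from~\eqref{eq: dual Gal orthog FD}. The only addition is your explicit check that $\Pi^-R_hu\in\Trial(S_h)$, which the paper leaves implicit but which is indeed the reason the Ritz projector must be inserted.
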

\begin{proof}
By taking $V=U_h$ in~\eqref{eq: dual FD} we find that
\[
\iprod{U_{h-}^N,z_T}=\iprod{U_{h-}^N,P_hz_T}
    =\iprod{U_{h-}^N,Z_{h+}^N}=G_N(U_h,Z_h),
\]
and taking $v=u$ in~\eqref{eq: dual weak} we have
$\iprod{u(t_N),z_T}=G_N(u,z)$, so
\begin{align*}
\bigiprod{U_{h-}^N-u(t_N),z_T}&=G_N(U_h,Z_h)-G_N(u,z)\\
    &=G_N(U_h-u,Z_h)+G_N(u,Z_h-z)=G_N(u,Z_h-z),
\end{align*}
where the final step used \eqref{eq: Gal orthog FD} with~$X=Z_h$.
Since
\[
G_N(u,Z_h-z)=G_N(u-\Pi^-R_hu,Z_h-z)+G_N(\Pi^-R_hu,Z_h-z),
\]
the result follows after putting $V=\Pi^-R_hu$
in~\eqref{eq: dual Gal orthog FD}.
\end{proof}

\subsection{Error in the fully discrete DG solution of the dual problem}
We modify the splitting~\eqref{eq: split Z-z}, by writing
$A^{-1}(Z_h-z)=\zeta+\Psi+\Phi$ where
\[
\zeta=A^{-1}(\Pi^+z-z),\quad
\Psi=A^{-1}\Pi^+(P_hz-z),\quad
\Phi=A^{-1}(Z_h-\Pi^+P_hz)\in\Trial(S_h).
\]
Theorem~\ref{thm: weak ||Z-z||} generalizes as follows.

\begin{theorem}\label{thm: Ainv(Z-z)}
Let $z$ denote the solution of the dual problem~\eqref{eq: dual}, and
let $Z_h$ denote the fully discrete DG solution defined
by~\eqref{eq: dual FD}.  Then, for $-1<\alpha<1$,
\[
\|A^{-1}(Z_h-z)\|_J\le C\bigl(t_N^{\alpha_+}k^{1+\alpha_-}+h^2\bigr)
    \ell(t_N/k_N)\|z_T\|.
\]
\end{theorem}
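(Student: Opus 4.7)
The plan is to replicate the argument behind Theorem~\ref{thm: weak ||Z-z||} with the three-way splitting $A^{-1}(Z_h-z)=\zeta+\Psi+\Phi$ indicated in the text. The triangle inequality gives
\[
\|A^{-1}(Z_h-z)\|_J\le\|\zeta\|_J+\|\Psi\|_J+\|\Phi\|_J,
\]
and the $\zeta$-term is already controlled by Lemma~\ref{lem: zeta}, producing $\|\zeta\|_J\le Ct_N^{\alpha_+}k^{1+\alpha_-}\|z_T\|$, which accounts for the purely temporal part of the claimed bound.

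For the purely spatial term $\Psi$, I would observe that $\Pi^+$ acts in time while $A^{-1}$ acts in space, so $\Psi=\Pi^+\bigl(A^{-1}(P_hz-z)\bigr)$. The stability estimate $\|\Pi^+v\|_{I_n}\le 3\|v\|_{I_n}$ from~\eqref{eq: Pi norm} then reduces matters to bounding $\|A^{-1}(P_hz-z)(t)\|$. A standard duality argument exploiting the self-adjointness of $P_h$, the Ritz-projection estimate~\eqref{eq: Rh error}, and the elliptic regularity $\|A^{-1}w\|_2\le C\|w\|$ yields $\|A^{-1}(v-P_hv)\|\le Ch^2\|v\|$. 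Combined with $\|z(t)\|\le C\|z_T\|$ from Lemma~\ref{lem: dual reg}, this delivers $\|\Psi\|_J\le Ch^2\|z_T\|$, comfortably inside the target bound.

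The substantive step is bounding $\|\Phi\|_J$ by an argument parallel to Lemma~\ref{lem: Theta}. Using the Galerkin orthogonality~\eqref{eq: dual Gal orthog FD} and the identity $G_N(V,A^{-1}w)=G_N(A^{-1}V,w)$, one identifies $\Phi$ as the fully discrete DG solution of the backward problem $-\phi'+\B_\alpha^*A\phi=g$ on $(0,T)$ with $\phi(T)=0$ and $g=-\B_\alpha^*A(\zeta+\Psi)$. Applying stability (Theorem~\ref{thm: stability} to $\Rev\Phi$) gives
\[
\|\Phi\|_J^2\le 8\biggl|\int_0^T\iprod{\Phi,\B_\alpha^*A(\zeta+\Psi)}\,dt\biggr|.
\]
The $\zeta$-contribution is estimated directly by Lemma~\ref{lem: bilinear}, contributing $Ct_N^{\alpha_+}k^{1+\alpha_-}\ell(t_N/k_N)\|\Phi\|_J\|z_T\|$. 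For the $\Psi$-contribution, I would repeat the calculation underlying Lemma~\ref{lem: bilinear} verbatim, but replace the use of the integral representation~\eqref{eq: error Pi plus} and Lemma~\ref{lem: dual reg} by the analogous $h^2$-bound on $A\Psi$ (and on $A\Psi'$ where needed), using the $L_2$-projection estimates termwise on each $I_n$. This produces a factor $Ch^2\ell(t_N/k_N)\|\Phi\|_J\|z_T\|$; dividing by $\|\Phi\|_J$ and combining with the three contributions gives the stated bound.

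The main obstacle is transferring the commutation trick $G_N(V,A^{-1}w)=G_N(A^{-1}V,w)$ into the fully discrete setting: in Lemma~\ref{lem: Theta} it was crucial that $A^{-1}V\in\Trial$ whenever $V\in\Trial$, but here $V\in\Trial(S_h)$ while $A^{-1}S_h\not\subset S_h$. The natural remedy is to work with the discrete Laplacian $A_h:S_h\to S_h$ defined by $\iprod{A_hW,\chi}=A(W,\chi)$ for all $W,\chi\in S_h$, replacing $A^{-1}$ by $A_h^{-1}$ inside the Galerkin-orthogonality computation for $\Phi$; the extra spatial projection errors this generates are precisely what the $\Psi$ term already absorbs, so no further work is required once that bookkeeping is done carefully.
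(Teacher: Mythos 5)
Your splitting, your bound for $\zeta$ via Lemma~\ref{lem: zeta}, and your bound $\|\Psi\|_J\le Ch^2\|z_T\|$ (obtained by a duality argument for $\|A^{-1}(\Id-P_h)v\|$ rather than the paper's identity $\Psi=\Pi^+(R_h-\Id)A^{-1}z$, but equivalent) all match the intended argument. The gap is in the treatment of $\Phi$. The identification of $\Theta$ as a DG solution with source $-\B_\alpha^*A\zeta$ in Lemma~\ref{lem: Theta} rests on two special properties of $\zeta$: it vanishes at every left endpoint ($\zeta^{n-1}_+=0$) and has zero mean on each $I_n$, which kill the jump terms and the $\iprod{V,\zeta'}$ terms in \eqref{eq: GN dual}. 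Neither property holds for $\Psi$: one has $\Psi^{n-1}_+=(R_h-\Id)A^{-1}z(t_{n-1})\ne0$ and $\int_{I_n}\Psi\,dt\ne0$. Hence $-G_N(V,\Psi)$ is \emph{not} $\int_0^T\iprod{V,-\B_\alpha^*A\Psi}\,dt$, and $\Phi$ is not the DG solution of a backward problem with source $-\B_\alpha^*A(\zeta+\Psi)$. Moreover, the quantity you propose to estimate, $\int_0^T\iprod{\Phi,\B_\alpha^*A\Psi}\,dt$, would require a bound on $\|A\Psi\|=\|\Pi^+(P_h-\Id)z\|\le Ch^2\|Az\|$, and by Lemma~\ref{lem: dual reg} $\|Az(t)\|\le C(T-t)^{-1-\alpha}\|z_T\|$, which is not integrable near $t=T$ when $\alpha\ge0$; so ``repeating Lemma~\ref{lem: bilinear} verbatim'' with $A\Psi$ in place of $A\zeta$ cannot produce an $O(h^2)\logp(t_N/k_N)$ bound.

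The missing idea is that the fractional term involving $\Psi$ vanishes identically and should never be estimated: for $V$ taking values in $S_h$,
\[
A(V,\B_\alpha^*\Psi)=A\bigl(V,\B_\alpha^*A^{-1}\Pi^+(P_h-\Id)z\bigr)
=\iprod{V,(P_h-\Id)\B_\alpha^*\Pi^+z}=0 ,
\]
so the whole $\Psi$-contribution to $G_N(V,\Psi)$ in \eqref{eq: GN dual} reduces to the terms $\iprod{V^N_-,\Psi^N_-}$, $\sum_{n}\iprod{V^n_-,[\Psi]^n}$ and $\int_{I_n}\iprod{V,\Psi'}\,dt$, each of which is bounded by $Ch^2\|z_T\|$ (with a logarithm from the sums) using \eqref{eq: Pi norm}, \eqref{eq: Rh error} and $\|z'(t)\|\le C(T-t)^{-1}\|z_T\|$. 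A second, smaller issue: because the resulting functional $V\mapsto G_N(V,\Phi)$ contains these jump and endpoint terms, it is not of the form $\iprod{V^0_+,\cdot}+\int\iprod{f,V}\,dt$ with $f\in L_1$, so Theorem~\ref{thm: stability} does not apply directly to $\Rev\Phi$; one instead tests with $V$ equal to $\Phi$ cut off to $[t_{n-1},T]$ and uses the coercivity \eqref{eq: GN(V,V)} to pass from the bound on $|G_N(V,\Phi)|$ to a bound on $\|\Phi\|_J$. Your closing remark about replacing $A^{-1}$ by the discrete operator $A_h^{-1}$ on $S_h$-valued functions is a legitimate observation and is essentially the convention adopted silently via $A^{-1}P_h=R_hA^{-1}$, but it is not where the real difficulty lies.
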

\begin{proof}
We already estimated~$\|\zeta\|$ in Lemma~\ref{lem: zeta}.
To estimate~$\Psi$, observe that since $A^{-1}$ commutes with~$\Pi^+$
and since $AR_h=P_hA$ (implying $A^{-1}P_h=R_hA^{-1}$),
\begin{equation}\label{eq: Psi}
\Psi=\Pi^+(R_h-\Id)A^{-1}z.
\end{equation}
Using \eqref{eq: Pi norm}, the error bound~\eqref{eq: Rh error} for
the Ritz projection, $H^2$-regularity for~$A$ and
Lemma~\ref{lem: dual reg}, we find that
\begin{equation}\label{eq: ||Psi||}
\|\Psi\|_{I_n}\le3\|(R_h-\Id)A^{-1}z\|_{I_n}
    \le Ch^2\|A^{-1}z(t)\|_2\le Ch^2\|z(t)\|\le Ch^2\|z_T\|.
\end{equation}

To estimate~$\Phi$, observe that since $A^{-1}V=A^{-1}P_hV=R_hA^{-1}V$,
\begin{equation}\label{eq: GN(V,Phi)}
G_N(V,\zeta+\Psi+\Phi)=G_N(A^{-1}V,Z_h-z)=G_N(R_hA^{-1}V,Z_h-z)=0,
\end{equation}
where we used \eqref{eq: dual Gal orthog FD} with~$V$
replaced by~$R_hA^{-1}V$.  From the proof of Lemma~\ref{lem: Theta},
\begin{equation}\label{eq: GN(V,zeta)}
G_N(V,\zeta)=\int_0^T\iprod{V,\B_\alpha^*A\zeta}\,dt,
\end{equation}
and by~\eqref{eq: GN dual},
\begin{equation}\label{eq: GN(V,Psi)}
G_N(V,\Psi)=\iprod{V^N_-,\Psi^N_-}+\sum_{n=1}^{N-1}\iprod{V^n_-,[\Psi]^n}
    +\sum_{n=1}^N\int_{I_n}\bigl[-\iprod{V,\Psi'}+A(V,\B_\alpha^*\Psi)
    \bigr]\,dt.
\end{equation}
Since $A(V,\B^*_\alpha\Psi)=A\bigl(V,\B_\alpha^*A^{-1}\Pi^+(P_h-\Id)z)
=\iprod{V,(P_h-\Id)\B_\alpha^*\Pi^+ z}=0$,
\[
|G_N(V,\Psi)|\le\|V\|_J\biggl(\|\Psi^N_-\|+\sum_{n=1}^{N-1}\|[\Psi]^n\|
    +\sum_{n=1}^N\int_{I_n}\|\Psi'\|\,dt\biggr).
\]
By~\eqref{eq: ||Psi||}, $\|\Psi^N_-\|\le\|\Psi\|_{I_N}\le Ch^2\|z_T\|$.
Using \eqref{eq: Psi}, \eqref{eq: Pi norm}, \eqref{eq: Rh error}~and
Lemma~\ref{lem: dual reg}, we have
$\|[\Psi]^n\|\le\int_{I_n}\|(R_h-\Id)A^{-1}z'(t)\|\,dt
\le Ch^2\|z_T\|\int_{I_n}(T-t)^{-1}\,dt$ so
\[
\sum_{n=1}^{N-1}\|[\Psi]^n\|\le Ch^2\|z_T\|\int_0^{t_{N-1}}(T-t)^{-1}\,dt
    = Ch^2\|z_T\|\log(t_N/k_N).
\]
Using \eqref{eq: Pi norm}, \eqref{eq: Psi} and Lemma~\ref{lem: dual reg},
we find that
\[
\int_{I_n}\|\Psi'(t)\|\,dt\le\frac{2}{k_n}
    \int_{I_n}(t_n-t)\|(R_h-\Id)A^{-1}z'(t)\|\,dt
    \le\frac{Ch^2}{k_n}\,\|z_T\|\int_{I_n}\frac{t_n-t}{T-t}\,dt,
\]
so
\[
\sum_{n=1}^N\int_{I_n}\|\Psi'\|\,dt
    \le Ch^2\|z_T\|\biggl(\int_0^{t_{N-1}}(T-t)^{-1}\,dt
    +k_N^{-1}\int_{I_N}\,dt\biggr),
\]
and we conclude that $|G_N(V,\Psi)|\le Ch^2\ell(t_N/k_N)\|V\|_J\|z_T\|$.
Therefore, by \eqref{eq: GN(V,Phi)}, \eqref{eq: GN(V,zeta)}
and Lemma~\ref{lem: bilinear},
\begin{equation}\label{eq: |GN|}
|G_N(V,\Phi)|=|G_N(V,\zeta)+G_N(V,\Psi)|
    \le C\bigl(t_N^{\alpha_+}k^{1+\alpha_-}+h^2\bigr)
        \ell(t_N/k_N)\|V\|_J\|z_T\|.
\end{equation}

Fix~$n$ with~$1\le n\le N$, and define $V\in\Trial(S_h)$ by
\[
V(t)=\begin{cases}
    0,&\text{if $t\in I_j$ for $1\le j\le n-1$,}\\
    \Phi(t),&\text{if $t\in I_j$ for $n\le j\le N$.}
\end{cases}
\]
In view of~\eqref{eq: GN(V,V)},
$G_N(V,\Phi)\ge\tfrac12\|\Phi^N_-\|^2+\tfrac12\|\Phi^{n-1}_+\|^2
    +\tfrac12\sum_{j=n}^{N-1}\|[\Phi]^j\|^2$,
so the estimate~\eqref{eq: |GN|} gives
$\|\Phi^{n-1}_+\|^2+\|[\Phi]^n\|^2
\le C\bigl(t_N^{\alpha_+}k^{1+\alpha_-}+h^2\bigr)\ell(t_N/k_N)
\|\Phi\|_{(t_{n-1},T)}\|z_T\|$ for~$1\le n\le N-1$, whereas
\[
\|\Phi^{N-1}_+\|^2+\|\Phi^N_-\|^2\le C\bigl(
t_N^{\alpha_+}k^{1+\alpha_-}+h^2\bigr)\ell(t_N/k_N)
    \|\Phi\|_{(t_{N-1},T)}\|z_T\|.
\]
Furthermore, $\|\Phi\|_{I_n}=\max\bigl(\|\Phi^{n-1}_+\|,\|\Phi^n_-\|\bigr)$
because $\Phi$ is piecewise linear in~$t$, and
$\|\Phi^n_-\|\le\|\Phi^n_+\|+\|[\Phi]^n\|$, implying that
$\|\Phi\|_{I_n}^2\le\|\Phi^{n-1}_+\|^2+\|\Phi^n_+\|^2+\|[\Phi]^n\|^2$.
By letting $n^*=\argmax_{1\le n\le N}\|\Phi\|_{I_n}$, we see that
\[
\|\Phi\|_J^2=\|\Phi\|_{I_{n^*}}^2
    \le C\bigl(t_N^{\alpha_+}k^{1+\alpha_-}+h^2\bigr)\ell(t_N/k_N)
    \|\Phi\|_J\|z_T\|,
\]
giving the desired bound for~$\|\Phi\|_J$.
\end{proof}
\subsection{Fully-discrete nodal error}
As claimed in the Introduction, we have the following error bound 
for~$U_h$.	

\begin{theorem}\label{thm: Uh- nodal}
Assume that the solution~$u$ of the initial value problem~\eqref{eq: ivp} 
satisfies \eqref{eq: Au'}~and \eqref{eq: A^2u'}, and that
the time mesh satisfies assumptions \eqref{eq: kn}~and \eqref{eq: k1} 
with~$\gamma>\gamma^*=(2+\alpha_-)/\sigma$.  Then, 
the fully discrete DG solution~$U_h\in\Trial(S_h)$ satisfies
\[
\|U_{h-}^n-u(t_n)\|\le C_TM\bigl(k^{3+2\alpha_-}\ell(t_n/k_n)+h^2\bigr)
    \quad\text{for $0\le n\le N$.}
\]
\end{theorem}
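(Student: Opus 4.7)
The strategy is to emulate the proof of Theorem~\ref{thm: nodal conv}, starting from the fully discrete error representation of Theorem~\ref{thm: nodal error FD} and invoking the dual bound of Theorem~\ref{thm: Ainv(Z-z)}, while separately treating one new spatial-Ritz contribution. Fix $z_T \in \Hilb$ and split $u - \Pi^- R_h u = \eta + \Pi^- \rho$ with $\eta = u - \Pi^- u$ and $\rho = (I - R_h)u$, so that Theorem~\ref{thm: nodal error FD} gives
\[
\iprod{U_{h-}^N - u(t_N), z_T} = G_N(\eta, Z_h - z) + G_N(\Pi^- \rho, Z_h - z).
\]
It suffices to bound each piece by $C_T M\bigl(k^{3+2\alpha_-}\ell(t_N/k_N) + h^2\bigr)\|z_T\|$.

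For $G_N(\eta, Z_h - z)$, I would repeat verbatim the argument in the proof of Theorem~\ref{thm: ||nodal error||} with $Z$ replaced by $Z_h$ throughout. The bound on $\sum_n|\delta_1^n|$ is unaffected, since it uses only $\eta^n_- = 0$, the regularity of $z$ from Lemma~\ref{lem: dual reg}, and the fact that $Z_h'$ is piecewise constant. For $\sum_n|\delta_2^n|$, invoking Theorem~\ref{thm: Ainv(Z-z)} in place of Theorem~\ref{thm: weak ||Z-z||} introduces only an additive $h^2$ in the prefactor. Combining this with Lemma~\ref{lem: nodal conv} under the hypothesis $\gamma > \gamma^*$ (so that $\epsilon(u) \le CMk^{2+\alpha_-}$) and absorbing $h^2 k^{2+\alpha_-} \le C_T h^2$ yields the target bound on this piece.

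For $G_N(\Pi^-\rho, Z_h - z)$, I would decompose it as $G_N(\Pi^-\rho, Z_h) - G_N(\Pi^-\rho, z)$. The second piece equals $\iprod{(\Pi^-\rho)^N_-, z_T} = \iprod{\rho(t_N), z_T}$ by \eqref{eq: dual weak}, and is bounded by $Ch^2 M\|z_T\|$ via \eqref{eq: Rh error}, the elliptic regularity $\|v\|_2 \le C\|Av\|$, and \eqref{eq: Au'}. For the first piece, the crucial observation is that the Ritz identity $A(\rho(s), W) = 0$ for $W \in S_h$ extends by linearity to $A(\B_\alpha \Pi^-\rho(t), W) = 0$ for all $t$ and $W \in S_h$; taking $W = Z_h(t) \in S_h$ eliminates the bilinear term in the representation \eqref{eq: GN def} of $G_N(\Pi^-\rho, Z_h)$. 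The surviving initial, jump and time-derivative terms are controlled using $\|\Pi^-\rho\|_{I_n} \le 3\|\rho\|_{I_n} \le Ch^2 M$, the telescoping estimate $\|[\Pi^-\rho]^n\| \le \int_{I_{n+1}}\|\rho'(r)\|\,dr$ from \eqref{eq: Pi norm}, the Ritz bound $\|\rho'(t)\| \le Ch^2\|Au'(t)\| \le Ch^2 Mt^{\sigma-1}$ arising from \eqref{eq: Au'}, and the stability estimate $\|Z_h\|_J \le C\|z_T\|$ obtained by applying Theorem~\ref{thm: stability} to the reversed-time problem satisfied by $Z_h$. Summing yields $|G_N(\Pi^-\rho, Z_h)| \le C_T h^2 M\|z_T\|$.

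Combining the two estimates and taking the supremum over $z_T \in \Hilb$ with $\|z_T\| = 1$ gives the bound at $t_N$; rerunning the same argument on $[0, t_n]$ covers general $n$. \emph{Main obstacle.} The new ingredient is the analysis of $G_N(\Pi^-\rho, Z_h)$. One must verify that the time-only operator $\B_\alpha$ preserves the spatial Ritz orthogonality against $S_h$ (by inspecting the representation \eqref{eq: B repn} and noting that $\Pi^-\rho(s)$, $[\Pi^-\rho]^j$ and $(\Pi^-\rho)'(s)$ are all Ritz-orthogonal to $S_h$), and then control the sum of jumps $\sum_{n=1}^{N-1}\|[\Pi^-\rho]^n\|$ by the single integral $\int_0^T\|\rho'(r)\|\,dr = O(h^2)$, rather than by $N$ times a pointwise $O(h^2)$ bound, which would lose a factor $1/k$.
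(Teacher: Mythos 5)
Your proposal is correct and follows essentially the same route as the paper: the same splitting $u-\Pi^-R_hu=\eta+\Pi^-\xi$ with $\xi=(I-R_h)u$, the same reuse of the Theorem~\ref{thm: ||nodal error||} argument with Theorem~\ref{thm: Ainv(Z-z)} replacing Theorem~\ref{thm: weak ||Z-z||}, and the same key observation that Ritz orthogonality kills $A(\B_\alpha\Pi^-\xi,Z_h)$. The only (immaterial) difference is that the paper integrates by parts to rewrite the surviving terms of $G_N(\Pi^-\xi,Z_h-z)$ in terms of $\xi$ itself (whose jumps vanish), whereas you bound the jump and derivative terms of $\Pi^-\xi$ directly via the stability estimates~\eqref{eq: Pi norm}; both yield the same $O(h^2)$ contribution.
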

\begin{proof}
In view of Lemma~\ref{lem: nodal conv}, it suffices to show
(cf.~Theorem~\ref{thm: ||nodal error||}) that
\[
\|U_{h-}^n-u(t_n)\|\le C_T\bigl(k^{1+\alpha_-}
    +h^2\bigr)\,\ell(t_n/k_n)\epsilon(u)
    +C_TMh^2.
\]
Put $\xi=u-R_h u$ and $\eta=u-\Pi^-u$ so that
$u-\Pi^-R_hu=\eta+\Pi^-\xi$ and thus, by
Theorem~\ref{thm: nodal error FD}, $\iprod{U_{h-}^N-u(t_N),z_T}
=G_N(\eta,Z_h-z)+G_N(\Pi^-\xi,Z_h-z)$.
Using Theorem~\ref{thm: Ainv(Z-z)} in place of
Theorem~\ref{thm: weak ||Z-z||}, we can show
as in the proof of Theorem~\ref{thm: ||nodal error||} that
$|G_N(\eta,Z_h-z)|\le C_T\|z_T\|\bigl(k^{1+\alpha_-}+h^2\bigr)
\logp(t_n/k_n)\,\epsilon(u)$.
By~\eqref{eq: GN def}, $G_N(\Pi^-\xi,Z_h)$ equals
\[
\bigiprod{(\Pi^-\xi)^0_+,Z_{h+}^0}+\sum_{n=1}^{N-1}
    \bigiprod{[\Pi^-\xi]^n,Z_{h+}^n}
    +\sum_{n=1}^N\int_{I_n}\bigl[\bigiprod{(\Pi^-\xi)',Z_h}
        +A(\B_\alpha\Pi^-\xi,Z_h)\bigr]\,dt,
\]
and, since $\B_\alpha$ commutes with the Ritz projector~$R_h$, the
definition~\eqref{eq: Rh def} of~$R_h$ implies that
$A(\B_\alpha\Pi^-\xi,Z_h)=A(\B_\alpha\Pi^-u,Z_h)
-A(R_h\B_\alpha\Pi^-u,Z_h)=0$.
Integrating by parts, applying the interpolation and orthogonality
properties~\eqref{eq: Pi properties} of~$\Pi^-$, and noting that
$\xi^n_-=\xi(t_n)=(\Pi^-\xi)^n_-$ and that $Z_h'$ is constant on~$I_n$,
\begin{align*}
\int_{I_n}\bigiprod{(\Pi^-\xi)',Z_h}\,dt&=\bigiprod{(\Pi^-\xi)^n_-,Z_{h-}^n}
    -\bigiprod{(\Pi^-\xi)^{n-1}_+,Z^{n-1}_{h+}}
    -\int_{I_n}\bigiprod{\Pi^-\xi,Z_h'}\,dt\\
    &=\bigiprod{\xi^n_-,Z^n_{h-}}
        -\bigiprod{(\Pi^-\xi)^{n-1}_+,Z^{n-1}_{h+}}
        -\int_{I_n}\iprod{\xi,Z_h'}\,dt\\
    &=\bigiprod{\xi^{n-1}_+-(\Pi^-\xi)^{n-1}_+,Z^{n-1}_{h+}}
        +\int_{I_n}\iprod{\xi',Z_h}\,dt
\end{align*}
so $G_N(\Pi^-\xi,Z_h)=\bigiprod{\xi(0),Z^0_{h+}}
+\sum_{n=1}^{N-1}\bigiprod{[\xi]^n,Z^n_{h+}}
+\sum_{n=1}^N\int_{I_n}\iprod{\xi',Z_h}\,dt$.
Using \eqref{eq: dual weak} with~$v=\Pi^-\xi$, and noting that $[\xi]^n=0$,
we obtain
\begin{align*}
G_N(\Pi^-\xi,Z_h-z)&=\bigiprod{\xi(0),Z^0_{h+}}-\bigiprod{\xi(T),z_T}
    +\sum_{n=1}^N\int_{I_n}\iprod{\xi',Z_h}\,dt.
\end{align*}
Stability of the fully discrete dual problem, $\|Z_h\|_J\le C\|z_T\|$, 
follows from~\eqref{eq: GN(V,V)}, so
\begin{align*}
\bigl|G_N(\Pi^-\xi,Z_h-z)\bigr|
    &\le C\|z_T\|\biggl(\|\xi(0)\|+\|\xi(t_N)\|
    +\int_0^T\|\xi'\|\,dt\biggr)\\
    &\le Ch^2\|z_T\|\biggl(\|Au_0\|+\|Au(T)\|
        +\int_0^T\|Au'(t)\|\,dt\biggr),
\end{align*}
where we used the error bound~\eqref{eq: Rh error} for the Ritz projector.
The result follows using the regularity assumption~\eqref{eq: Au'}.
\end{proof}

\subsection{Postprocessing the fully discrete DG solution}\label{sec: Upp}
Theorem~\ref{thm: PP U-u} remains valid if $\Upp=\Lagint U$~and $U^n_-$
are replaced by $\Upp_h=\Lagint U_h$~and $U^n_{h-}$, respectively.
%
\section{Numerical results}\label{sec: Numerical}
%
We present a series of numerical tests using a model problem in one space dimension,
of the form~\eqref{eq: ivp} with
\[
Au=-u_{xx},\quad\Omega=(0,1),\quad[0,T]=[0,1],\quad u_0(x)=x(1-x),\quad f\equiv0,
\]
and homogeneous Dirichlet (absorbing) boundary conditions.  These tests reveal 
faster than expected convergence when~$\alpha<0$, and that our regularity 
assumptions are more restrictive than is needed in practice.
We apply the fully discrete DG method defined in Section~\ref{subsec: FD DG},
employing a time mesh of the form~\eqref{eq: std tn}, for various choices of 
the mesh grading parameter~$\gamma\ge1$, and a uniform spatial mesh consisting of 
$M$~subintervals, each of length $h=1/M$.  We always choose 
$M=\lceil N^{3/2}\rceil$ so that $h^2\approx k^3$ and hence the error from 
the time discretization dominates the spatial error.

\subsection{The exact solution}\label{subsec: exact}
Separation of variables yields a series representation 
\begin{equation}\label{eq: reg0}
u(x,t)=8\sum_{n=0}^\infty \omega_n^{-3} \sin(\omega_n x) 
	E_{1+\alpha}(-\omega_n^2 t^{1+\alpha})\quad
	\text{with $\omega_n=(2n+1)\pi$,}
\end{equation}
where the Mittag--Leffler function is given by
$E_\nu( t)=\sum_{p=0}^\infty t^p/\Gamma(1+\nu p)$.  We can verify directly
that $u$ satisfies the regularity conditions
\begin{equation}\label{eq: u' practice}
t^{1+\alpha}\|Au'(t)\|+t^{2+\alpha}\|A u''(t)\|	
	\le Mt^{\sigma-1}\quad\text{for $0<t\le T$,}
\end{equation}
with
\begin{equation}\label{eq: ||u||_2 practice}
\|u(t)\|_2+t\|u'(t)\|_2\le M\quad\text{for $0<t\le T$.}
\end{equation}
In fact, by differentiating~\eqref{eq: reg0}, 
\[
\partial_t^ju_{xx}(x,t)=-8\sum_{n=0}^\infty \omega_n^{-1}\sin(\omega_n x) 
		\frac{d^j}{dt^j}E_{1+\alpha} (-\omega_n^2 t^{1+\alpha})
	\quad\text{for $j\in\{1,2\}$,}
\]
so by Parseval's identity,
\[
\|\partial_t^jAu(t)\|^2=\|\partial_t^ju_{xx}(t)\|^2
	=32\sum_{n=0}^\infty \omega_n^{-2}
	\biggl(\frac{d^j}{dt^j}E_{1+\alpha}(-\omega_n^2 t^{1+\alpha})
	\biggr)^2. 
\]
The Mittag--Leffler function satisfies~\cite[Theorem~4.2]{McLean2010}
\begin{equation}\label{eq: bound ml}
\biggl|\frac{d^j}{dt^j}E_{1+\alpha}(-\omega_n^2 t^{1+\alpha})\biggr|
	\le C t^{-(1+\alpha)\mu -j} \omega_n^{-2\mu}\quad
	\text{for $j\in\{1,2,3,\ldots\}$ and $|\mu|\le 1$,}
\end{equation} 
and taking $\mu =-\epsilon$ yields
\[
\bigl(t^{j+\alpha}\|\partial_t^jAu(t)\|\bigr)^2
	\le C t^{2\epsilon(1+\alpha)+2\alpha}\sum_{n=0}^\infty
	\omega_n^{4\epsilon-2}
	\le\frac{C\bigl(t^{\epsilon(1+\alpha)+\alpha}\bigr)^2}{1-4\epsilon}
	\quad\text{for $-1<\epsilon<\tfrac14$.}
\]
Thus, the regularity condition~\eqref{eq: u' practice} holds 
for~$\sigma=(1+\epsilon)(1+\alpha)<\tfrac54(1+\alpha)$~and 
$M=C(\tfrac14-\epsilon)^{-1/2}$.  In particular, putting $\epsilon=0$ gives
the bound for~$t\|u'(t)\|_2$ in~\eqref{eq: ||u||_2 practice}, and since
$|E_{1+\alpha}(-\omega_n^2t^{1+\alpha})|\le C$ for all~$t>0$ we also
have $\|u(t)\|_2^2\le C\sum_{n=0}^\infty\omega_n^{-2}<\infty$.
However, $u$ fails to satisfy the second regularity
assumption~\eqref{eq: A^2u'} used in our theoretical analysis.

\begin{table}
\renewcommand{\arraystretch}{1}
\begin{center}
\caption{The left nodal error $\max_{1\le n\le N}\,||U_{h-}^n-u(t_n)||$ 
and the rate of convergence when~$\alpha=-0.3$, for different mesh gradings~$\gamma$.}
\label{tab: left nodal alpha neg}
\begin{tabular}{|r|rr|rr|rr|rr|rr|}
\hline {$N$}&\multicolumn{2}{c|}{$\gamma=1$} &\multicolumn{2}{c|}{$\gamma=2$}
&\multicolumn{2}{c|}{$\gamma=3$}&\multicolumn{2}{c|}{$\gamma=3.25$}\\
\hline
 20& 2.01e-03&      & 1.08e-04&      & 6.39e-05&      & 6.39e-05&      \\
 40& 8.61e-04& 1.220& 3.15e-05& 1.780& 1.09e-05& 2.546& 1.10e-05& 2.535\\
 80& 3.90e-04& 1.143& 9.33e-06& 1.758& 1.81e-06& 2.596& 1.82e-06& 2.595\\
160& 2.21e-04& 0.821& 2.77e-06& 1.753& 2.92e-07& 2.632& 2.94e-07& 2.632\\
\hline
\end{tabular}
\vspace{2ex}
\caption{The right nodal error $\max_{0\le n\le N-1}\,||U_{h+}^n-u(t_n)||$ 
and the rate of convergence when~$\alpha=-0.3$, for different mesh gradings~$\gamma$.}
\label{tab: right nodal alpha neg}
\begin{tabular}{|r|rr|rr|rr|rr|rr|}
\hline {$N$}&\multicolumn{2}{c|}{$\gamma=1$} &\multicolumn{2}{c|}{$\gamma=2$}
&\multicolumn{2}{c|}{$\gamma=3$}&\multicolumn{2}{c|}{$\gamma=3.25$}\\
\hline
 20& 4.74e-02&      & 6.03e-03&      & 1.63e-03&      & 1.52e-03&      \\
 40& 3.05e-02& 0.636& 2.26e-03& 1.416& 4.18e-04& 1.966& 3.91e-04& 1.964\\
 80& 1.89e-02& 0.689& 8.51e-04& 1.410& 1.06e-04& 1.982& 9.89e-05& 1.982\\
160& 1.16e-02& 0.710& 3.21e-04& 1.406& 2.66e-05& 1.990& 2.49e-05& 1.989\\
\hline
\end{tabular}
\end{center}
\end{table}
\subsection{Nodal errors}
The numerical results described below suggest that 
\begin{equation}\label{eq: observed nodal error}
\underset{1\le n\le N}{\max}\|U^n_{h-}-u(t_n)\|\le Ch^2+C\times\begin{cases}
        k^{\gamma\sigma},&1\le\gamma\le(3+\alpha_-)/\sigma,\\
        k^{3+\alpha_-},&\gamma>(3+\alpha_-)/\sigma.
        \end{cases}
\end{equation}
Thus, the time discretization error appears to be $O(k^{3+\alpha_-})$
for $\gamma>(3+\alpha_-)/\sigma$, compared to our theoretical bound
of~$O(k^{3+2\alpha_-})$ for~$\gamma>(2+\alpha_-)/\sigma$, where the latter
assumes the stronger regularity conditions 
\eqref{eq: Au'}~and \eqref{eq: A^2u'}.  

For~$\alpha=-0.3$, we observe in Table~\ref{tab: left nodal alpha neg} 
convergence of order~$k^{1.25 \gamma(\alpha+1)}$ 
for~$1\le\gamma \le (3+\alpha)/[1.25(\alpha+1)]\approx 3.086$.  
In particular, the highest observed convergence rate is $O(k^{3+\alpha_-})$, 
and not $O(k^{3+2\alpha_-})$ as expected from Theorem~\ref{thm: Uh- nodal}.
Table~\ref{tab: right nodal alpha neg} shows that the right-hand 
limit~$U^n_{h+}=U_h(t_n^+)=\lim_{t\to t_n^+}U_h(t)$
is not a superconvergent approximation to~$u(t_n)$; the error
is $O(k^2)$ at best.

For~$\alpha=+0.3$, Table~\ref{tab: left nodal alpha pos} shows
convergence of order~$k^{1.25\gamma(\alpha+1)}$ 
for~$1\le\gamma \le 3/[1.25(\alpha+1)]\approx 1.85$, so in the best case
the error is~$O(k^3)$, consistent with Theorem~\ref{thm: Uh- nodal}.
In Table~\ref{tab: right nodal alpha pos}, we see that $U_h^+$ again
fails to be superconvergent.

\begin{table}
\renewcommand{\arraystretch}{1}
\begin{center}
\caption{The left nodal error $\max_{1\le n\le N}\,||U_{h-}^n-u(t_n)||$ 
and the rate of convergence when~$\alpha=+0.3$, for different mesh gradings~$\gamma$.}
\label{tab: left nodal alpha pos}
\begin{tabular}{|r|rr|rr|rr|rr|rr|}
\hline {$N$}&\multicolumn{2}{c|}{$\gamma=1$} &\multicolumn{2}{c|}{$\gamma=1.5$}
&\multicolumn{2}{c|}{$\gamma=1.75$}&\multicolumn{2}{c|}{$\gamma=2$}\\
\hline
 20& 2.10e-04&      & 2.08e-05&      & 1.21e-05&      & 1.23e-05&      \\
 40& 6.77e-05& 1.632& 3.61e-06& 2.527& 1.61e-06& 2.904& 1.57e-06& 2.966\\
 80& 2.19e-05& 1.636& 6.43e-07& 2.486& 2.13e-07& 2.917& 1.99e-07& 2.983\\
160& 7.11e-06& 1.625& 1.17e-07& 2.461& 2.80e-08& 2.930& 2.53e-08& 2.972\\
\hline
\end{tabular}
\vspace{2ex}
\caption{The right nodal error $\max_{0\le n\le N-1}\,||U_{h+}^n-u(t_n)||$ 
and the rate of convergence when~$\alpha=+0.3$, for different mesh gradings~$\gamma$.}
\label{tab: right nodal alpha pos}
\begin{tabular}{|r|rr|rr|rr|rr|}
\hline {$N$}&\multicolumn{2}{c|}{$\gamma=1$} &\multicolumn{2}{c|}{$\gamma=1.5$}
&\multicolumn{2}{c|}{$\gamma=1.75$}\\
\hline
 20& 3.265e-03&      & 8.548e-04&      & 9.207e-04&      \\
 40& 1.536e-03& 1.088& 2.165e-04& 1.982& 2.338e-04& 1.977\\
 80& 6.726e-04& 1.191& 5.432e-05& 1.995& 5.873e-05& 1.993\\
160& 2.851e-04& 1.238& 1.361e-05& 1.997& 1.472e-05& 1.996\\
\hline
\end{tabular}
\end{center}
\end{table}
Given $\alpha$, it is natural to ask which value of~$\gamma$ leads to the 
smallest error.  Figure~\ref{fig: error against gamma} shows the maximum 
nodal error (on a logarithmic scale) as a function of~$\gamma\in[1,8]$
for 4~choices of~$\alpha$, when $M=512$~and $N=64$ (so $h^2=k^3$).  
The error is minimised when~$\gamma\approx(3+\alpha_-)/\sigma$; 
for instance, in the case~$\alpha=0.2$ the best choice is 
$\gamma\approx3/[\tfrac54(1.2)]=2$.
In Figure~\ref{fig: hp error against alpha}, we instead show
the maximum nodal error as a function of~$\alpha\in[-0.9,0.9]$
for 4~choices of~$\gamma$.  The benefit from using non-uniform time 
steps is clear, except when~$\alpha$ is close to $-1$~or $1$.

\begin{figure}[htb]
\begin{center}
\caption{The left nodal error ${\rm max}_{0\le n\le N}\,||U_{h-}^n-u(t_n)||$ 
as a function of~$\gamma$, for~$\alpha=-0.8,-0.4,0.2$ and $0.6$, when 
$M=512$~and $N=64$ (so $h^2=k^3$).}\label{fig: error against gamma}
\scalebox{0.45}{\includegraphics{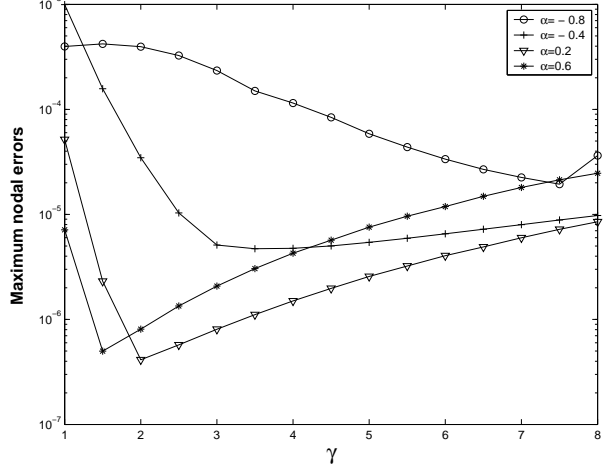}}
\end{center}
\end{figure}
\begin{figure}[htb]
\begin{center}
\caption{The left nodal error~${\rm max}_{0\le n\le N}\,||U_{h-}^n-u(t_n)||$ 
as a function of~$\alpha$, for $\gamma=1$, 2, 3, 4, when  $M=512$~and 
$N=64$ (so $k^3=h^2$).}\label{fig: hp error against alpha}
\scalebox{0.45}{\includegraphics{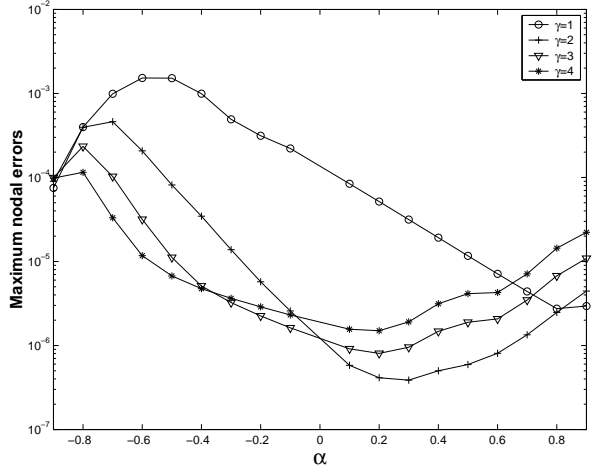}}
\end{center}
\end{figure}
\begin{table}
\renewcommand{\arraystretch}{1}
\begin{center}
\caption{The uniform DG error after postprocessing, $\|\Upp_h-u\|_{J,12}$, 
and its rate of convergence, when~$\alpha=-0.3$, for different mesh 
gradings~$\gamma$.}
\label{tab: pp dg alpha<0}
\begin{tabular}{|r|rr|rr|rr|rr|}
\hline 
$N$&\multicolumn{2}{c|}{$\gamma=1$} &\multicolumn{2}{c|}{$\gamma=2$}
&\multicolumn{2}{c|}{$\gamma=3$} &\multicolumn{2}{c|}{$\gamma=3.9$}\\
\hline
 20& 3.79e-02&      & 4.52e-03&      & 1.46e-03&      &8.13e-04& \\
 40& 2.37e-02& 0.675& 1.68e-03& 1.425& 3.27e-04& 2.154&1.20e-04& 2.763\\
 80& 1.44e-02& 0.716& 6.31e-04& 1.416& 7.49e-05& 2.127&1.79e-05& 2.743\\
160& 8.74e-03& 0.724& 2.38e-04& 1.410& 1.73e-05& 2.113&2.69e-06& 2.735\\
\hline
\end{tabular}
\vspace{2ex}
\caption{The uniform DG error after postprocessing, $\|\Upp_h-u\|_{J,12}$, 
and its rate of convergence, when~$\alpha=+0.3$, for different mesh 
gradings~$\gamma$.}
\label{tab: pp dg alpha>0}
\begin{tabular}{|r|rr|rr|rr|rr|rr|}
\hline 
$N$&\multicolumn{2}{c|}{$\gamma=1$} &\multicolumn{2}{c|}{$\gamma=1.5$}
&\multicolumn{2}{c|}{$\gamma=2$} &\multicolumn{2}{c|}{$\gamma=2.35$}\\
\hline
 20& 2.51e-03&      & 4.38e-04&      & 1.56e-04&      & 1.89e-04&       \\
 40& 1.16e-03& 1.120& 1.22e-04& 1.845& 2.72e-05& 2.515& 2.29e-05& 3.046\\
 80& 5.02e-04& 1.205& 3.34e-05& 1.867& 4.59e-06& 2.568& 2.80e-06& 3.029\\
160& 2.12e-04& 1.245& 8.88e-06& 1.911& 7.63e-07& 2.588& 3.44e-07& 3.024\\
\hline
\end{tabular}
\end{center}
\end{table}
\subsection{Global error after post-processing}
We introduce a finer mesh
\begin{equation}\label{eq: fine grid}
\G^{N,m}=\{\,t_{j-1}+\ell k_j/m:\text{$j=1$, 2, \dots, $N$ and
    $\ell=0$, 1, \dots, $m$}\,\},
\end{equation}
and define the discrete maximum 
norm~$\|v\|_{J,m}=\max_{t\in\G^{N,m}}\|v(t)\|$, so that, for 
sufficiently large values of~$m$, $\|U-u\|_{J,m}$ approximates the
global error~$\|U-u\|_J$.  Now, in addition to the regularity 
assumptions \eqref{eq: u' practice}~and \eqref{eq: ||u||_2 practice}, 
we require that $u$ satisfies~\eqref{eq: u'''}.  In fact, we see 
from \eqref{eq: reg0}~and \eqref{eq: bound ml} that, with~$\mu=-1$,
\[
\bigl(t^{j-1}\|\partial_t^ju(t)\|\bigr)^2\le C\bigl(t^{(1+\alpha)-1}\bigr)^2
	\sum_{n=0}^\infty\omega_n^{-2}
	\le C\bigl(t^{(1+\alpha)-1}\bigr)^2,
\]
so \eqref{eq: u'''} holds for~$\sigmapp=1+\alpha$.  
Using Theorem~\ref{thm: PP U-u} (cf.~Subsection~\ref{sec: Upp}) and
\eqref{eq: observed nodal error} 
with~$\sigmapp=(1+\alpha)<\sigma\approx\tfrac54(1+\alpha)$, we expect
\[
\|\Upp_h-u\|_J\le Ch^2+C\times\begin{cases}
	k^{\gamma\sigmapp},&1\le\gamma\le(3+\alpha_-)/\sigmapp,\\
	k^{3+\alpha_-},&\gamma>(3+\alpha_-)/\sigmapp.
	\end{cases}
\]
We observe this convergence behaviour in Tables \ref{tab: pp dg alpha<0}~and 
\ref{tab: pp dg alpha>0}.
\section{Concluding remarks}
We have analysed a piecewise-linear DG method for the time discretization
of~\eqref{eq: ivp} --- a fractional diffusion ($-1<\alpha<0$) or wave 
($0<\alpha<1$) equation --- and proved superconvergence at the nodes, 
generalizing a known result for the classical heat equation.  Numerical 
experiments indicate that our theoretical error bounds are sharp 
if~$\alpha>0$, but not if~$\alpha<0$.  For generic regular data 
$u_0$~and $f$, derivatives of the exact solution are singular as~$t\to0$, 
but nevertheless by employing non-uniform time steps we achieve
a high convergence rate of~$O(k^{3+\alpha_-})$.  After postprocessing 
the solution, the same high accuracy is achieved for all~$t$, not just 
at the nodes.  We have also proved that the additional error arising 
from a spatial discretization by continuous piecewise-linear finite 
elements is essentially~$O(h^2)$.  In future work, we aim to treat the 
case when the initial data~$u_0$ is not smooth.
\bibliographystyle{siam}
\bibliography{superconv-refs}
\end{document}